\title{\LARGE \(Aut(\Gamma)=\det(\mathcal{M}_\Gamma)\)}
\author{Zhonggan Huang \\11510296@mail.sustc.edu.cn}
\begin{document}
\maketitle

\begin{abstract}
In this article, we will show that the automorphism group of any hypergraph is essentially equal to the determinant of some matrix over a ring generated from the set of ground points. With this, we are also able to determine whether two graphs are isomorphic.
\end{abstract}
\newtheorem{defn}{Definition}[section]
\newtheorem{thm}{Theorem}[section]
\newtheorem{prop}{Proposition}[section]
\newtheorem{rmk}[thm]{Remark}
\newtheorem{exmp}[defn]{Example}

\section{Introduction}
For a long time people have been investigating the algebraic structures of graphs, one of which is their automorphism groups~\cite{n1}. Many people think it quite hard to obtain the automorphism group of an arbitrary graph, and even regard this problem as a special class in the computation complexity hierachy. But now, in this article, we've found a way to better understand how its automorphism group forms and transforms. We at first use the gound set to generate a ring called \emph{the Ring of Partials}, which was recognized as the power set of the symmetric group \(S_X\). Indeed, they share the same set and the same structure, but we give the elements different names for simplicity of calculations. We observe that in this ring, the behaviors of the gound points are in a highly regular and comfortable way. In fact, we find that the automorphism group is exactly the determinant of some matrix over the ring, which means that \emph{Graph Automorphism Problem} is not as hard as we thought before. This striking fact also applies to the \emph{Graph Isomorphism Problem}.

\section{A Baby Idea}
Given a hypergraph \(\Gamma\), we let \(n=|\Gamma|\), and write \(\Gamma=\left\{A_1,\,A_2,\,\dots,\,A_n\right\}\). We define
\(Sgl(\Gamma)\coloneq \bigcup \Gamma = \left\{x\in X\,;\, x\in A_i,\;
\text{for some}\;i \right\}\). We call \(x\in X\) \(\Gamma\)-\emph{singular} if \(x\in Sgl(\Gamma)\), otherwise we call it \(\Gamma\)-\emph{free}. A hypergraph is called \emph{spanning} if its singular set is exactly \(X\). For the rest of the article, we assume all the hypergraphs we talk about are spanning.

We call a point \(x\in X\) of \emph{degree} \(k\) in \(\Gamma\) if \(x\) is included in exactly \(k\) elements in \(\Gamma\). And we define \(k\)-\emph{class} of \(\Gamma\) as \(\mathscr{C}_\Gamma^k=\{x\in X\,;\, \text{deg } x =k\}\). Thus, the set of singular points is exactly the subset of all points of degree bigger than 0.

For the hypergraph \((X,\Gamma)\), of which we can define the \(k\)-\emph{section}
\[
\Gamma_k\coloneq \{A\in\Gamma\,;\,|A|=k\}\,,\, 1\le k\le m
\]
then \((X,\Gamma_k)\) becomes the maximal \(k\)-homogeneous sub-hypergraph of \((X,\Gamma)\). It is quite clear that a permutation will not bring an edge to a different section, and so we can show

\begin{prop}
	\[
	Aut(\Gamma)=\bigcap_{k=1}^{m} Aut(\Gamma_k)
	\]\label{sectionintersection}
\end{prop}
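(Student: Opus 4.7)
The plan is to prove both containments of the equality, relying on the fact that any permutation of $X$ preserves cardinalities of subsets. Observe first that $\Gamma$ decomposes as the disjoint union $\Gamma=\bigsqcup_{k=1}^{m}\Gamma_k$, since every edge $A\in\Gamma$ has a unique cardinality $|A|=k$ with $1\le k\le m$. This cardinality-partitioning is the only structural fact I need.

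For the inclusion $Aut(\Gamma)\subseteq\bigcap_{k} Aut(\Gamma_k)$, I would take any $\sigma\in Aut(\Gamma)$ and fix $k$. For each $A\in\Gamma_k$, the image $\sigma(A)$ lies in $\Gamma$ because $\sigma$ is an automorphism, and moreover $|\sigma(A)|=|A|=k$ because $\sigma$ is a bijection on $X$. Hence $\sigma(A)\in\Gamma_k$, so $\sigma$ restricts to a permutation of $\Gamma_k$, i.e.\ $\sigma\in Aut(\Gamma_k)$. This holds for every $k$.

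For the reverse inclusion, I would take $\sigma\in\bigcap_{k}Aut(\Gamma_k)$ and any $A\in\Gamma$. Setting $k=|A|$ puts $A\in\Gamma_k$, and since $\sigma\in Aut(\Gamma_k)$, the image $\sigma(A)$ belongs to $\Gamma_k\subseteq\Gamma$. Therefore $\sigma$ sends $\Gamma$ into $\Gamma$; applying the same argument to $\sigma^{-1}$ (which also lies in each $Aut(\Gamma_k)$, these being subgroups of $\mathrm{Sym}(X)$) yields surjectivity, so $\sigma\in Aut(\Gamma)$.

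There is no genuine obstacle here: the argument is essentially a cardinality bookkeeping. The only point worth flagging is the implicit use of the assumption, stated just before the proposition, that a permutation cannot send an edge to a different section, which is exactly the cardinality-preservation property we exploit. Because the proof is symmetric in the two directions and uses nothing beyond the definitions of $Aut(\Gamma)$, $\Gamma_k$, and the fact that $\mathrm{Sym}(X)$ acts by bijections, I would keep the write-up short and simply display the two set-theoretic inclusions in sequence.
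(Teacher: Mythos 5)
Your proof is correct and follows essentially the same route as the paper's: both arguments rest on the single observation that a permutation of $X$ preserves cardinalities and hence cannot move an edge to a different section, from which the two inclusions follow immediately. You simply spell out the bookkeeping (including the $\sigma^{-1}$ step for surjectivity) that the paper's one-line proof leaves implicit.
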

\begin{proof}
	An automorphism of \(\Gamma\) will not bring an edge to a different section, then it will preserve edges in each section; a permutation that preserves each section of \(\Gamma\) will certainly preserve \(\Gamma\). \label{sectionpreserve}
\end{proof}

This easy result is not our goal, but everything becomes much more interesting when we observe the following facts. Now we will focus on the automorphisms of \(k\)-homogeneous hypergraphs, and later we will extend the result.

\begin{defn}
	\begin{itemize}
		\item[\(a.\)] To each \(A_i\in\Gamma_k\), we denote its stabilizer \(G_i=S_{A_i}\otimes S_{X\backslash A_i}\);	
		\item[\(b.\)] Given \(A_i,A_j\in \Gamma_k\), there should be a permutation that brings \(A_i\) to \(A_j\), and we denote that permutation \(\sigma_{ij}\). (We will give a general way to construct \(\sigma_{ij}\) soon.) 
	\end{itemize}
\end{defn}

\begin{prop}
	Given two permutations \(\sigma_{ij}\) and \(\sigma_{ij}^\prime\), we have
	\[
	\sigma_{ij}G_i=\sigma_{ij}^\prime G_i=G_j\sigma_{ij}=G_j\sigma_{ij}^\prime
	\]
	\label{transfercoset}
\end{prop}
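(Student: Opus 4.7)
The plan is to identify all four expressions with a single concrete set, namely \(T_{ij}:=\{\tau \in S_X : \tau(A_i) = A_j\}\), from which the stated equalities will be immediate. The key preliminary observation is that \(G_i = S_{A_i}\otimes S_{X\setminus A_i}\) coincides with the full setwise stabilizer of \(A_i\) in \(S_X\): any permutation that fixes \(A_i\) as a set must also fix \(X\setminus A_i\), and therefore decomposes uniquely as a product of a permutation of \(A_i\) and a permutation of its complement.

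First I would show \(\sigma_{ij}G_i = T_{ij}\). The inclusion \(\subseteq\) is immediate, since \((\sigma_{ij}g)(A_i) = \sigma_{ij}(A_i) = A_j\) for every \(g \in G_i\). For the reverse inclusion, if \(\tau \in T_{ij}\) then \(\sigma_{ij}^{-1}\tau\) sends \(A_i\) to \(A_i\), so it lies in \(G_i\), whence \(\tau = \sigma_{ij}(\sigma_{ij}^{-1}\tau) \in \sigma_{ij}G_i\). The same argument with \(\sigma_{ij}^\prime\) in place of \(\sigma_{ij}\) yields \(\sigma_{ij}^\prime G_i = T_{ij}\), giving the first equality in the chain. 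An entirely analogous argument, using that \(G_j\) is the setwise stabilizer of \(A_j\) and that \(\tau\sigma_{ij}^{-1}\) stabilizes \(A_j\) whenever \(\tau \in T_{ij}\), shows \(G_j\sigma_{ij} = G_j\sigma_{ij}^\prime = T_{ij}\).

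Since all four sets then coincide with \(T_{ij}\), the proposition follows. If one prefers a more intrinsic formulation, the identity \(\sigma_{ij}G_i = G_j\sigma_{ij}\) is equivalent to \(\sigma_{ij}G_i\sigma_{ij}^{-1} = G_j\), which holds because conjugation by \(\sigma_{ij}\) carries the setwise stabilizer of \(A_i\) to that of \(\sigma_{ij}(A_i) = A_j\). I expect no real obstacle; the only care required is to verify that \(S_{A_i}\otimes S_{X\setminus A_i}\) is literally the full setwise stabilizer of \(A_i\) rather than merely a subgroup of it, which is a direct consequence of the partition structure of \(X\).
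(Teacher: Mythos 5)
Your proof is correct, and it takes a genuinely (if mildly) different route from the paper's. You identify all four sets with the transporter set \(T_{ij}=\{\tau\in S_X:\tau(A_i)=A_j\}\) and verify the four inclusions directly, after noting that \(S_{A_i}\otimes S_{X\setminus A_i}\) is the \emph{full} setwise stabilizer of \(A_i\) (which is right: a permutation preserving \(A_i\) setwise preserves its complement and splits accordingly). The paper instead argues pairwise: \(\sigma_{ij}G_i=\sigma_{ij}^\prime G_i\) because \(\sigma_{ij}^{-1}\sigma_{ij}^\prime\) stabilizes \(A_i\); \(G_j\sigma_{ij}=G_j\sigma_{ij}^\prime\) because \(\sigma_{ij}^\prime\sigma_{ij}^{-1}\) stabilizes \(A_j\); and for the middle equality \(\sigma_{ij}G_i=G_j\sigma_{ij}\) it uses the conjugation map \(g_j\mapsto\sigma_{ij}^{-1}g_j\sigma_{ij}\), an injection \(G_j\to G_i\) that is onto because the two groups have the same finite cardinality. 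Your version buys two things: it avoids the cardinality count entirely (conjugation by \(\sigma_{ij}\) visibly carries the stabilizer of \(A_i\) onto that of \(A_j\), with no finiteness needed), and it makes explicit the description \(\sigma_{ij}G_i=T_{ij}\) that the paper relies on informally later, e.g.\ in the proof of Theorem \ref{interunion}, where \(\sigma_{ij}G_i\) is said to ``collect all the permutations that bring \(A_i\) to \(A_j\).'' The paper's pairwise comparison is marginally shorter for the first and last equalities, but your single-set argument is the cleaner and more reusable one.
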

\begin{proof}
	Observe that \(\sigma_{ij}^{-1}\sigma_{ij}^\prime\) will bring \(A_i\) to itself, and so \(\sigma_{ij}^{-1}\sigma_{ij}^\prime \in G_i\); Similarly we have \(\sigma_{ij}^\prime\sigma_{ij}^{-1}\) brings \(A_j\) to itself, and so \(\sigma_{ij}^\prime\sigma_{ij}^{-1} \in G_j\). To see \(\sigma_{ij}G_i=G_j\sigma_{ij}\), we have to observe that for \(g_j\in G_j\), \(g_j\mapsto \sigma_{ij}^{-1}g_j\sigma_{ij}\) defines an injection from \(G_j\) to \(G_i\); since both are of the same finite cardinality, we have that the mapping is onto.
\end{proof}

\begin{thm}
	Let \(\sigma_{ij}\in S_{X}\) such that it takes \(A_i\) to \(A_j\), then
	\[Aut(\Gamma_k)=\bigcap_{i}\bigcup_j \sigma_{ij} G_i\]
	\label{interunion}
\end{thm}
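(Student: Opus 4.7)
The plan is to rewrite each building block $\sigma_{ij}G_i$ as the explicit set of permutations sending $A_i$ to $A_j$, so that the double quantification on the right-hand side translates into the condition that every edge is carried to some edge.

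First I would verify the identification $\sigma_{ij}G_i = \{\pi \in S_X : \pi(A_i) = A_j\}$. The inclusion $\subseteq$ is immediate, since every $g \in G_i$ stabilises $A_i$ setwise while $\sigma_{ij}$ sends $A_i$ to $A_j$, so any $\sigma_{ij}g$ takes $A_i$ to $A_j$. For $\supseteq$, given any $\pi$ taking $A_i$ to $A_j$, the element $\sigma_{ij}^{-1}\pi$ stabilises $A_i$ setwise, hence lies in $G_i$, so $\pi \in \sigma_{ij}G_i$. Note that Proposition \ref{transfercoset} guarantees that this description does not depend on the particular choice of $\sigma_{ij}$.

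Next I would take the union over $j$ to get $\bigcup_j \sigma_{ij}G_i = \{\pi \in S_X : \pi(A_i) \in \Gamma_k\}$, and then intersect over $i$ to obtain the set of permutations that send every edge of $\Gamma_k$ to some edge of $\Gamma_k$. The remaining task is to identify this set with $Aut(\Gamma_k)$. One direction is by definition: every automorphism carries each edge to an edge. For the other direction, I would need to argue that a permutation $\pi$ sending each $A_i$ into $\Gamma_k$ actually induces a bijection on $\Gamma_k$, i.e., that every edge is also hit.

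The only subtle point, and hence the would-be main obstacle, is this surjectivity. It is resolved by finiteness: since $\pi$ is a bijection on $X$, distinct edges $A_i, A_{i'}$ are sent to distinct images, so the induced map on the finite set $\Gamma_k$ is injective and therefore surjective. Combining the two inclusions then yields $Aut(\Gamma_k) = \bigcap_i \bigcup_j \sigma_{ij}G_i$, as required.
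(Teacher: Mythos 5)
Your proposal is correct and follows essentially the same route as the paper: identify each coset \(\sigma_{ij}G_i\) with the set of permutations carrying \(A_i\) to \(A_j\), so that the intersection of unions becomes the set of permutations sending every edge to some edge. You are in fact more careful than the paper on the converse inclusion, which the paper merely asserts; your observation that the induced map on the finite edge set \(\Gamma_k\) is injective (hence surjective) is exactly the detail needed to conclude that such a permutation is a genuine automorphism.
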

\begin{proof}
	Given \(g\in Aut(\Gamma_k)\), then for any \(A_i\in\Gamma_k\), \(g(A_i)\in \Gamma_k\). That is to say, there will be an \(A_j\in \Gamma_k\) such that \(g(A_i)=A_j\). Thus, for any \(1\le i\le n\), we have that 
	\[
	g\in \bigcup_{j=1}^{n} \sigma_{ij}G_i
	\]
	where each \(\sigma_{ij}G_i\) collects all the permutations that bring \(A_i\) to \(A_j\). Thus \(\displaystyle Aut(\Gamma_k)\subseteq \bigcap_{i=1}^{n}\bigcup_{j=1}^{n} \sigma_{ij}G_i\). 
	
	Conversely, for \(\displaystyle g\in\bigcap_{i=1}^{n}\bigcup_{j=1}^{n} \sigma_{ij}G_i\), we see that \(g\) will only bring elements in \(\Gamma_k\) to those of itself.
\end{proof}

\begin{rmk}
	 We would like to define \(\sigma_{ij}\)'s as follows:
	 From \(A_i\) to \(A_j\), we consider \(\sigma_{ij}\) a permutation that maps \(\Delta_{ij}\coloneq A_i\backslash A_j\) to \(\Delta_{ji}\coloneq A_j\backslash A_i\) and an identity elsewhere. More precisely, we first order \(\Delta_{ij}\) and \(\Delta_{ji}\) respectively, and then take elements in \(\Delta_{ij}\) to those of the same order in \(\Delta_{ji}\). If we define \(\sigma_{ij}\)'s in this way, we see that they will thus be involutions. As an example, for \(A_1=\{1,2,3,4,7,8\}\) and \(A_2=\{3,4,1,5,34,17\}\), we define \(\sigma_{12}\coloneq(25)(7,17)(8,34)\). \label{constructionofsigma}
\end{rmk}

To see the essential structure of the automorphism group of a given \(k\)-homogeneous hypergraph \(\Gamma=\{A_1,\,\dots,\,A_n\}\), we first take a look at a natural subgroup---the collection of all permutations that preserve all the edges at the same time. Notice that this group, denoted by \(\bigcap_{i=1}^{n}G_i\), is normal in \(Aut(\Gamma)\), because for \(g\in Aut(\Gamma),\;\sigma\in \bigcap_{i=1}^{n}G_i\), we still have \(g\sigma g^{-1}\in \bigcap_{i=1}^{n}G_i\). We will denote this subgroup \(\mathcal{K}_\Gamma\).

According to \emph{Theorem} \ref{interunion}, we can do further calculations to analyse the automorphism groups in general. To begin with, we write the cosets into a matrix:
\[
\mathcal{M}_\Gamma\coloneq
\left(\begin{matrix}
\sigma_{11}G_1 & \sigma_{12}G_1 & \cdots&\sigma_{1n}G_1\\
\sigma_{21}G_2 & \sigma_{22}G_2 & \cdots&\sigma_{2n}G_2\\
\vdots&  \vdots&  \ddots&\vdots\\
\sigma_{n1}G_n & \sigma_{n2}G_n & \cdots &\sigma_{nn}G_n
\end{matrix}
\right)
\]

According to \emph{Proposition} \ref{transfercoset}, we see the above matrix can be reformulated as:

\[
\mathcal{M}_\Gamma=
\left(\begin{matrix}
G_1 & G_2\sigma_{12} & \cdots&G_n\sigma_{1n}\\
G_1\sigma_{21} & G_2 & \cdots&G_n\sigma_{2n}\\
\vdots&\vdots&\ddots&\vdots\\
G_1\sigma_{n1} & G_2\sigma_{n2}& \cdots &G_n
\end{matrix}
\right)
\]

Observe any two entries that are in the same column, say the \(i\)-th, are distinct cosets of \(G_i\), and hence by group theory, the two must be disjoint.

Notice in \emph{Theorem} \ref{interunion}, what we did is just take the union of each row of \(\mathcal{M}\), and then take the intersection of the unions. Basic calculations tell us that it is equivalent to first pick out exactly one entry from each row, and take their intersections, and then take the union of the intersections. From prior analysis, those intersections, in which there are more than two entries in the same column, must vanish. That is to say, an intersection
\[
\sigma_{1j_1}G_1\bigcap\sigma_{2j_2}G_2\bigcap\cdots\bigcap\sigma_{nj_n}G_n
\]
must vanish, when
\[
\left(\begin{matrix}
1&2&\cdots&n\\
j_1&j_2&\cdots&j_n
\end{matrix}
\right)
\]
is not a permutation. Therefore, we have the following proposition.

\begin{prop}
	\[\bigcap_{i}\bigcup_j \sigma_{ij} G_i=\bigcup_{j\_\in S_n} \bigcap_{i=1}^{n}\sigma_{ij_i}G_i\]\label{determinantuse}
\end{prop}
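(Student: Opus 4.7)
The plan is to expand the left-hand side via the general distributive law of intersection over union, and then show that every summand whose index tuple is not a permutation contributes nothing, leaving precisely the right-hand side.

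First I would invoke the infinite distributive identity
\[
\bigcap_{i=1}^{n}\bigcup_{j=1}^{n} S_{ij} \;=\; \bigcup_{\phi\colon [n]\to [n]} \bigcap_{i=1}^{n} S_{i,\phi(i)}
\]
with $S_{ij}=\sigma_{ij}G_i$, where the outer union on the right ranges over \emph{all} functions $\phi\colon [n]\to [n]$, not only over bijections. This rewrites $\bigcap_{i}\bigcup_{j}\sigma_{ij}G_i$ as a union of $n$-fold intersections, one for each index-selection function $\phi$.

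Next I would argue that every term with $\phi$ non-injective is empty. Using Proposition \ref{transfercoset} to rewrite $\sigma_{ij}G_i = G_j\sigma_{ij}$, every entry in the $j$-th column of $\mathcal{M}_\Gamma$ is a right coset of the common subgroup $G_j$. If $\phi(i_1)=\phi(i_2)=j$ for some $i_1\neq i_2$, then
\[
\bigcap_{i=1}^{n}\sigma_{i,\phi(i)}G_i \;\subseteq\; G_j\sigma_{i_1,j}\,\cap\, G_j\sigma_{i_2,j}.
\]
Any element $g$ of this last intersection would satisfy both $g(A_{i_1})=A_j$ and $g(A_{i_2})=A_j$, forcing $A_{i_1}=A_{i_2}$ and contradicting the distinctness of the edges of $\Gamma_k$. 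Hence the two cosets are disjoint and the whole intersection is empty.

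Since a function $\phi\colon [n]\to [n]$ is injective iff it is bijective iff it lies in $S_n$, only the terms with $\phi\in S_n$ survive, yielding
\[
\bigcap_{i}\bigcup_{j}\sigma_{ij}G_i \;=\; \bigcup_{\phi\in S_n}\bigcap_{i=1}^{n}\sigma_{i,\phi(i)}G_i,
\]
which is the displayed right-hand side after renaming $(j_1,\dots,j_n)=(\phi(1),\dots,\phi(n))$. There is no serious obstacle: the only point that needs care is the column-wise disjointness of distinct cosets in $\mathcal{M}_\Gamma$, and once we pass to the $G_j\sigma_{ij}$ presentation this is immediate from the ``one permutation cannot send two different sets to the same image'' observation above. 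Everything else is the standard distributive law.
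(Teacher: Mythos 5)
Your proposal is correct and follows essentially the same route the paper takes: the paper's argument (given in the discussion preceding the proposition rather than in a proof environment) likewise distributes the intersection over the unions and discards every selection that repeats a column, on the grounds that two entries in the same column are distinct cosets of the same stabiliser $G_j$ and hence disjoint. Your explicit justification of that disjointness, namely that a common element would send two distinct edges to the same edge, is the same observation the paper compresses into the phrase ``by group theory, the two must be disjoint.''
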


From the above proposition, one will observe that, the number of the intersections will be no more than the number of permutations of \([n]\), \emph{i.e.} \(n!\).

Recall that \(\mathcal{K}_\Gamma\) is a normal subgroup of \(Aut(\Gamma)\), and if we suppose that \(l=|Aut(\Gamma)|/|\mathcal{K}_\Gamma|\), then there will be \(g_1=(1),\,g_2,\,\dots,,\,g_l\) in \(Aut(\Gamma)\) such that
\[
Aut(\Gamma)=g_1\mathcal{K}_\Gamma \bigsqcup g_2\mathcal{K}_\Gamma\bigsqcup\cdots\bigsqcup g_l\mathcal{K}_\Gamma
\]

\begin{prop}
	For \(j\_\) a permutation of edges, we have that \(\bigcap_{i=1}^{n}\sigma_{ij_i}G_i\) will not vanish if and only if it is a coset of \(\mathcal{K}_\Gamma\) in \(Aut(\Gamma)\).\label{cosetiff}
\end{prop}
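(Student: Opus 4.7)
The plan is to reduce the proposition to a single set-theoretic characterization: by the definition of $G_i = S_{A_i}\otimes S_{X\setminus A_i}$ together with \emph{Proposition} \ref{transfercoset}, one has $h \in \sigma_{ij_i}G_i$ if and only if $h$ sends $A_i$ to $A_{j_i}$. This translation is what makes the whole argument elementary, and it is the first thing I would verify explicitly. The ``if'' direction of the proposition is then immediate, since any coset of $\mathcal{K}_\Gamma$ in $Aut(\Gamma)$ is by definition a nonempty set.

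For the substantive ``only if'' direction, I would pick any $g \in \bigcap_{i=1}^{n}\sigma_{ij_i}G_i$, which exists by assumption. By the characterization above, $g(A_i) = A_{j_i}$ for every $i$, and since $j_\_$ is a permutation of $\{1,\dots,n\}$, the induced map on edges is a bijection of $\Gamma$, so $g \in Aut(\Gamma)$. It then remains to identify the whole intersection with $g\mathcal{K}_\Gamma$. For one inclusion, take $\sigma \in \mathcal{K}_\Gamma = \bigcap_i G_i$; then $g\sigma(A_i) = g(A_i) = A_{j_i}$ for every $i$, so $g\sigma$ lies in each $\sigma_{ij_i}G_i$. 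For the reverse, if $h$ also lies in the intersection, then $g^{-1}h(A_i) = A_i$ for every $i$, so $g^{-1}h \in \bigcap_i G_i = \mathcal{K}_\Gamma$, giving $h \in g\mathcal{K}_\Gamma$.

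I do not anticipate a serious obstacle here; the argument is essentially a translation between the coset language and the pointwise language on edges. The one subtle check is that the coset $g\mathcal{K}_\Gamma$ lies inside $Aut(\Gamma)$ and not merely inside $S_X$, but this is immediate once $g$ is known to be an automorphism and once one notes $\mathcal{K}_\Gamma \subseteq Aut(\Gamma)$, since every element of $\mathcal{K}_\Gamma$ fixes every edge setwise and therefore trivially preserves $\Gamma$.
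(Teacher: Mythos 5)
Your proof is correct, and it takes a genuinely more direct route than the paper's. The paper argues by contraposition: it assumes the intersection equals no coset $g\mathcal{K}_\Gamma$ with $g\in Aut(\Gamma)$, rewrites $g\mathcal{K}_\Gamma=\bigcap_i gG_i$, and uses the fact that two distinct cosets of the same $G_{i_0}$ are disjoint to conclude that the intersection is disjoint from all of $Aut(\Gamma)$ and hence empty --- a final step that silently relies on the containment $\bigcap_i\sigma_{ij_i}G_i\subseteq Aut(\Gamma)$, which the paper never states in that proof. You instead pick a witness $g$ in the nonempty intersection, verify explicitly that $g\in Aut(\Gamma)$ (using that $j_\_$ is a permutation of the edge indices), and then identify the intersection with $g\mathcal{K}_\Gamma$ by a two-inclusion argument resting on the characterization of $\sigma_{ij_i}G_i$ as the set of permutations carrying $A_i$ to $A_{j_i}$ --- the same characterization the paper establishes in the proof of \emph{Theorem}~\ref{interunion}. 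What your version buys is that it names the coset concretely and fills in the implicit step about membership in $Aut(\Gamma)$; what the paper's version buys is that it never has to choose an element and leans only on the equal-or-disjoint dichotomy for cosets. Both are sound, and yours is arguably the cleaner account of why the nonempty fibers are exactly the cosets of $\mathcal{K}_\Gamma=\ker\psi$ over $\text{Im}(\psi)$.
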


\begin{proof}
	Suppose for every \(g\in Aut(\Gamma)\), \(g\mathcal{K}_\Gamma\ne\bigcap_{i=1}^{n}\sigma_{ij_i}G_i\). Recall left multiplication by \(g\) induces a bijection over \(Aut(\Gamma)\), there is 
	\[
	g\mathcal{K}_\Gamma=g\left(\bigcap_{i=1}^{n}G_i\right)
	=\bigcap_{i=1}^{n}gG_i
	\]
	Then, there must be some \(i_0\) such that \(gG_{i_0}\ne \sigma_{i_0j_{i_0}}G_{i_0} \), and since they are both cosets of \(G_{i_0}\), they must be disjoint, and thus the assumption can be reformulated as:
	\[
	\forall g\in Aut(\Gamma),\, g\mathcal{K}_\Gamma\text{ is disjoint from }\bigcap_{i=1}^{n}\sigma_{ij_i}G_i
	\]
	Thus, \(\bigcap_{i=1}^{n}\sigma_{ij_i}G_i\) is disjoint from \(Aut(\Gamma)\), which implies that \(\bigcap_{i=1}^{n}\sigma_{ij_i}G_i\) is empty. The reverse is evident.
\end{proof}

\vspace*{5pt}

Given an element in \(\bigcap_{i=1}^{n}\sigma_{ij_i}G_i\), we shall see it will induce a permutation 
\[\left(\begin{matrix}
1&2&\cdots&n\\
j_1&j_2&\cdots&j_n
\end{matrix}
\right)\]
over the edges \(\Gamma=\{A_1,\,\dots,\,A_n\}\) by permuting the indices. Then, we have a corollary.
\begin{thm}
	\(Aut(\Gamma)/\mathcal{K}_\Gamma\) can be naturally embeded into \(\mathcal{S}_n\), and \(j\in \mathcal{S}_n\) is in \(Aut(\Gamma)/\mathcal{K}_\Gamma\) if and only if \(\bigcap_{i=1}^{n}\sigma_{ij_i}G_i\) is not empty.
\end{thm}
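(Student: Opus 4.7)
The plan is to exhibit the natural embedding as the homomorphism induced by the action of $Aut(\Gamma)$ on the indexed edge set $\{A_1,\dots,A_n\}$, identify its kernel as $\mathcal{K}_\Gamma$ via the first isomorphism theorem, and then read off the image from the coset description already established in Proposition \ref{transfercoset}.

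First I would define $\phi\colon Aut(\Gamma)\to\mathcal{S}_n$ by letting $\phi(g)=j\_$ be the permutation with $g(A_i)=A_{j_i}$; this is well defined because the $A_i$ are distinct elements of $\Gamma$, and it is a homomorphism because composition of automorphisms corresponds to composition of the induced index permutations. Next I would compute the kernel: $g\in\ker\phi$ iff $g(A_i)=A_i$ for every $i$, i.e.\ iff $g\in G_i$ for every $i$, so $\ker\phi=\bigcap_{i=1}^{n} G_i=\mathcal{K}_\Gamma$. The first isomorphism theorem then gives the desired embedding $Aut(\Gamma)/\mathcal{K}_\Gamma\hookrightarrow\mathcal{S}_n$, which is \emph{natural} in the sense that it is literally the permutation representation of $Aut(\Gamma)$ on the ordered list of edges.

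For the characterisation of the image, I would invoke the fact, implicit in Proposition \ref{transfercoset}, that $\sigma_{ij_i}G_i$ is exactly the set of permutations of $X$ that send $A_i$ to $A_{j_i}$. Thus $\phi(g)=j\_$ iff $g\in\sigma_{ij_i}G_i$ for every $i$, that is, iff $g\in\bigcap_{i=1}^{n}\sigma_{ij_i}G_i$. Consequently $j\_$ lies in the image of $\phi$ iff this intersection meets $Aut(\Gamma)$; but by Theorem \ref{interunion} we have $\bigcap_{i=1}^{n}\sigma_{ij_i}G_i\subseteq\bigcap_{i=1}^{n}\bigcup_{j=1}^{n}\sigma_{ij}G_i=Aut(\Gamma_k)$, so any witness of non-emptiness is automatically an automorphism. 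Hence $j\_\in\mathrm{Im}\,\phi$ iff $\bigcap_{i=1}^{n}\sigma_{ij_i}G_i\neq\emptyset$, as claimed.

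The only step that requires real care is this last containment: one has to be sure that an element $g\in\bigcap_{i}\sigma_{ij_i}G_i$, which a priori is just some permutation of $X$ sending each $A_i$ to an $A_{j_i}$, actually preserves the whole edge set of $\Gamma$ and thus belongs to $Aut(\Gamma)$. Theorem \ref{interunion} disposes of this at once, and beyond that point the argument is routine group-theoretic bookkeeping built on Proposition \ref{transfercoset} and the first isomorphism theorem.
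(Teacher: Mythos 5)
Your proposal is correct and follows essentially the same route as the paper: the permutation representation of \(Aut(\Gamma)\) on the indexed edges, kernel computation giving \(\mathcal{K}_\Gamma\), the first isomorphism theorem, and the coset description of \(\sigma_{ij_i}G_i\) to read off the image. The only difference is that you spell out the reverse implication (that a witness of non-emptiness lies in \(Aut(\Gamma)\), via Theorem \ref{interunion}), which the paper dismisses as ``trivial''; this is a welcome bit of extra care rather than a divergence in method.
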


\begin{proof}
	For the first part, let 
	\[
	\begin{split}
	\psi: Aut(\Gamma)&\longrightarrow \mathcal{S}_n\\
	g&\longmapsto\psi(g)
	\end{split}
	\]
	here \(A_{\psi_g(i)}\coloneq gA_i\), for any \(A_i\in\Gamma,\,\forall 1\le i\le n\). It is easy to verify that the map is a well-defined homomorphism. If \(\psi(f)=id\in \mathcal{S}_n\) for some \(f\), then we see \(fA=A\) for all \(A\) in \(\Gamma\), which implies that \(f\in \mathcal{K}_\Gamma\). It is then clear to see that the kernel of \(\psi\) is exactly \(\mathcal{K}_\Gamma\). By fundamental theorem of homomorphisms, one should find the result. 
	
	For the second part, for \(j\in\mathcal{S}_n\), we may write it as:
	\[
	\left(\begin{matrix}
	1&2&\cdots&n\\
	j_1&j_2&\cdots&j_n
	\end{matrix}
	\right)
	\]
	then if \(j\) is in \(Aut(\Gamma)/\mathcal{K}_\Gamma=\text{Im}(\psi)\), there must be an element \(v\) in \(Aut(\Gamma)\) such that \(\psi(v)=j\). Notice \(v\in \sigma_{i{j_i}}G_i\) for all \(1\le i\le n\). And hence \(v\mathcal{K}_\Gamma\) is not empty. The reverse is trivial.
\end{proof}

It is reasonable to think of \(\mathcal{K}_\Gamma\)'s as plain groups. To illustrate this fact, we give a classification of \(\mathcal{K}_\Gamma\), where \(\Gamma\)'s are assumed to be spanning 2-homogeneous hypergraphs, \emph{i.e.} simply spanning graphs. We embed the symmetric group of a subset into \(S_X\) by making its elements identity on its complement, and call it the \emph{symmetry} of the subset.

\begin{defn}[\emph{Radical}]
	In a graph, we call an edge a \textbf{radical} if its vertices merely lie in it.
\end{defn}

\begin{prop}
	A spanning graph \(\Gamma\) have trivial \(\mathcal{K}_\Gamma\) if and only if it has no radicals. Moreover, \(\mathcal{K}_\Gamma\) will be the direct product of the symmetries of the radicals.	
\end{prop}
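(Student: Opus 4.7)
The plan is to show that an element of $\mathcal{K}_\Gamma = \bigcap_{i=1}^n G_i$ must act as a product of disjoint transpositions, one for each radical it moves, from which both conclusions follow immediately. Recall that $\sigma \in \mathcal{K}_\Gamma$ exactly when $\sigma$ stabilizes each edge $A_i = \{u_i, v_i\}$ setwise.

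First I would handle the easy direction: if $A_i = \{u,v\}$ is a radical, then the transposition $(u\,v)$ lies in $\mathcal{K}_\Gamma$, because $u$ and $v$ appear in no edge other than $A_i$, which is preserved by $(u\,v)$, while every other edge is untouched. Moreover, two distinct radicals share no vertex (a shared vertex would sit in two edges, contradicting the radical condition), so the symmetries of different radicals commute and act on pairwise disjoint ground subsets; the subgroup they generate inside $\mathcal{K}_\Gamma$ is thus the internal direct product $\prod_{A_i \text{ radical}} S_{A_i}$.

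For the converse, I would take a nontrivial $\sigma \in \mathcal{K}_\Gamma$ and any vertex $u$ moved by $\sigma$, say $\sigma(u) = v \neq u$. Since $\Gamma$ is spanning, $u$ lies in some edge $\{u,w\}$; as $\sigma$ fixes this edge setwise, $\{v, \sigma(w)\} = \{u, w\}$, and since $v \neq u$ we must have $v = w$ and $\sigma(w) = u$, so $\sigma$ swaps $u$ with $w$. The step I expect to be most delicate is showing that $\{u,w\}$ is actually a radical: if $u$ were also contained in some other edge $\{u, w'\}$ with $w' \neq w$, then the same setwise-preservation argument forces $\{w, \sigma(w')\} = \{u, w'\}$, whence $w = w'$, a contradiction; the symmetric argument rules out $w$ lying in a second edge. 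Consequently every moved point sits in a radical, and $\sigma$ restricts to the full swap on each such radical.

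Combining the two directions, every element of $\mathcal{K}_\Gamma$ is a product of the transpositions attached to some subcollection of radicals, and every such product lies in $\mathcal{K}_\Gamma$. Therefore $\mathcal{K}_\Gamma$ equals the internal direct product of the symmetries of the radicals, and in particular is trivial if and only if $\Gamma$ has no radicals.
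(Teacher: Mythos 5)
Your proof is correct and follows essentially the same approach as the paper: both arguments use setwise preservation of an edge to force a nontrivial element of \(\mathcal{K}_\Gamma\) to swap a moved vertex with its neighbour, and then derive a contradiction from the existence of a second incident edge. Your version is in fact slightly more complete, since you explicitly verify the ``moreover'' clause (that every element of \(\mathcal{K}_\Gamma\) is exactly a product of the radical transpositions), which the paper only asserts after noting that radicals are pairwise disjoint.
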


\begin{proof}

	Suppose there are no radicals in the graph \(\Gamma\), then we shall see for each edge \(A=\{x,y\}\), we may without loss of generality find \(deg(x)>1\). That is to say, \(x\) is at least in some other edge \(B=\{x,z\}\). If \(\mathcal{K}_\Gamma\) is not trivial, let \(\sigma\in \mathcal{K}_\Gamma\) such that \(\sigma\ne (1)\in \mathcal{S}_X\), and since the choice of \(A\) is arbitrary, we may simply take \(A\) to be the edge so that \(\sigma\) does not fix \(x\) and \(y\) at the same time. If \(\sigma(x)\ne x\), then to fix \(A\), \(\sigma(x)=y\), and similarly, to fix \(B\), we have \(\sigma(x)=z\), which impies that \(z=y\) and it's impossible. If \(\sigma(y)\ne y\), then to fix \(A\), we have \(\sigma(y)=x\) and \(\sigma(x)=y\), and similarly, to fix \(B\), \(\sigma(x)=z\), and hence another contradiction. 
	
	We have shown if a spanning graph has no radicals, it has trivial \(\mathcal{K}_\Gamma\). Thus, a spanning graph with nontrivial \(\mathcal{K}_\Gamma\) must have finitely many radicals. Since radicals are mutually disjoint, we see \(\mathcal{K}_\Gamma\) will be a direct product of their symmetries.
	
	In a word, if we let \(\{r_1,\,\dots,\,r_p\}\subseteq \Gamma\) denote the radicals, we see
	\[
	\mathcal{K}_\Gamma=\bigotimes_{l=1}^{p}\mathcal{S}_{r_l} 
	\]
\end{proof}

\section{Ring of Partials}
The above result tells us that the structure of \(\mathcal{K}_\Gamma\) is somewhat simple in general, and what we should aim at is the cosets of \(\mathcal{K}_\Gamma\). We shall continue to focus on spanning graphs.

The first question is: \emph{What are \(\sigma_{ij}G_i\)'s?} Well, one thing we must be aware of is that \(\sigma_{ij}\)'s are involutions of orders less than or equal to 2 and \(G_i\)'s are stabilisers of \(A_i\)'s. Such clear structures stimulate us to find out what are in \(\sigma_{ij}G_i\).

\begin{defn}[\emph{p-partial}]
	 A \(p\)-\textbf{partial} on \(X\) is defined to be an injective function \(\pi\) from a \(p\)-subset of \(X\) to \(X\) itself. A \(p\)-partial \(\pi\) can be expressed as
		\[
		\left(\begin{matrix}
		a_1&a_2&\cdots&a_p\\
		\pi_{a_1}&\pi_{a_2}&\cdots&\pi_{a_p}
		\end{matrix}
		\right)
		\]
	where \(\{a_1,\,a_2,\,\dots,\,a_p\}\subset X\) is the \(p\)-subset.

\end{defn}

\begin{prop}
	Any partial can be extended to a permutation on \(X\).	
\end{prop}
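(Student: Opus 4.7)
The statement is essentially a finiteness/counting claim, so the plan is to explicitly construct such an extension by filling in the unspecified values with any bijection between the complementary sets. Let $\pi$ be a $p$-partial with domain $S=\{a_1,\dots,a_p\}\subset X$ and image $\pi(S)=\{\pi_{a_1},\dots,\pi_{a_p}\}\subset X$.

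First I would record the key numerical input: since $\pi$ is injective, $|\pi(S)|=|S|=p$, and since $X$ is finite (as is implicit throughout the paper, where $X$ is the ground set of a hypergraph), the two complements $X\setminus S$ and $X\setminus \pi(S)$ both have cardinality $|X|-p$. Hence there exists at least one bijection $\tau:X\setminus S \longrightarrow X\setminus \pi(S)$ (for instance, order both complements and match by position, exactly as in Remark \ref{constructionofsigma}).

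Second, I would define the extension $\tilde\pi:X\to X$ piecewise by
\[
\tilde\pi(x)=\begin{cases}\pi(x), & x\in S,\\ \tau(x), & x\in X\setminus S.\end{cases}
\]
It remains to verify $\tilde\pi$ is a permutation, i.e.\ a bijection. Injectivity follows because $\pi$ is injective on $S$, $\tau$ is injective on $X\setminus S$, and their images $\pi(S)$ and $X\setminus\pi(S)$ are disjoint by construction. Surjectivity then follows from injectivity on the finite set $X$, or directly from $\tilde\pi(X)=\pi(S)\cup\tau(X\setminus S)=\pi(S)\cup(X\setminus\pi(S))=X$.

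There is no real obstacle: the only thing to be a bit careful about is making sure the chosen bijection on the complements lands in $X\setminus\pi(S)$ rather than in $X\setminus S$, so that the two halves of $\tilde\pi$ have disjoint images. Once that is set up, the verification is immediate, and in fact the construction shows that any $p$-partial admits many extensions — precisely $(|X|-p)!$ of them.
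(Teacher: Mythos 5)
Your proof is correct and rests on the same idea as the paper's: the injectivity of $\pi$ forces $|X\setminus S|=|X\setminus\pi(S)|$, so one completes $\pi$ by any bijection between the two complements. The only cosmetic difference is that the paper splits into cases and chooses the particular completion that is the identity off $\mathrm{dom}(\pi)\cup\mathrm{cod}(\pi)$ (matching only $A\setminus B$ to $B\setminus A$ as in Remark \ref{constructionofsigma}), whereas you allow an arbitrary bijection on the full complement — which also lets you observe, correctly, that there are exactly $(|X|-p)!$ such extensions.
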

\begin{proof}
		 We at first claim that if the partial is essentially a permutation over its domain, then it can be extended to the whole \(X\), which is easy to do by making it identity everywhere outside its domain. Now we assume that if it is not a permutation yet. According to the definition, the domain and the codomain must have the same number of elements. Let \(A\) and \(B\) denote the domain and the codomain respectively, by assumption, we shall see \(\Delta_1\coloneq A\backslash B\) have the same cardinality as \(\Delta_2\coloneq B\backslash A\). By \emph{Remark} \ref{constructionofsigma}, we can construct a bijection from \(\Delta_1\) to \(\Delta_2\), which along with the original partial, will without doubt induce a permutation over \(A\bigcup B\). According to the first claim, we are done.
\end{proof}

Notice that given a \(p\)-partial, we don't have the information about where it comes from, but for two partials we can check whether they can be joint together. In other words, for a \(p\)-partial
	\[
	\left(\begin{matrix}
	a_1&a_2&\cdots&a_p\\
	j_{a_1}&j_{a_2}&\cdots&j_{a_p}
	\end{matrix}
	\right)
	\] and a \(q\)-partial
	\[
	\left(\begin{matrix}
	b_1&b_2&\cdots&b_q\\
	l_{b_1}&l_{b_2}&\cdots&l_{b_q}
	\end{matrix}
	\right)
	\] 
	we say they are \emph{uniform} if they do not collide with each other in the sense that if there is, for instance, \(a_t=b_h\), then \(j_{a_t}=l_{b_h}\), and if there is \(j_{a_t^\prime}=l_{b_h^\prime}\), then \(a_t^\prime=b_h^\prime\). 
\begin{defn}
	On the class of all the partials over \(X\), we define the following order:
	\[
	j\le l \text{ if and only if } dom(j)\subseteq dom(l),\, cod(j)\subseteq cod(l)\text{ and } l \text{ can restrict to }j
	\]
\end{defn}

\begin{prop}
Given finitely many uniform partials, there exists a unique minimum partial that can restrict to all of them. 
\end{prop}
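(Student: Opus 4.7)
The plan is to construct the minimum explicitly as the ``union'' of the given partials and then verify the two defining properties (extending each input, and being below every common upper bound). Concretely, let $\pi_1,\ldots,\pi_k$ be the uniform partials, and define $\pi$ by
\[
\mathrm{dom}(\pi)\coloneq\bigcup_{i=1}^{k}\mathrm{dom}(\pi_i),\qquad \pi(x)\coloneq\pi_i(x)\text{ whenever }x\in\mathrm{dom}(\pi_i).
\]
The first thing I would check is that this is well-defined: if $x\in\mathrm{dom}(\pi_i)\cap\mathrm{dom}(\pi_j)$, the first clause of uniformity (``if $a_t=b_h$ then $j_{a_t}=l_{b_h}$'') forces $\pi_i(x)=\pi_j(x)$, so the value assigned to $x$ does not depend on which $\pi_i$ we consult.

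Next, I would verify that $\pi$ is a partial, \emph{i.e.}\ injective. Suppose $\pi(x)=\pi(y)$ with $x\in\mathrm{dom}(\pi_i)$ and $y\in\mathrm{dom}(\pi_j)$. Then $\pi_i(x)=\pi_j(y)$, and the second clause of uniformity (``if $j_{a_t^\prime}=l_{b_h^\prime}$ then $a_t^\prime=b_h^\prime$'') yields $x=y$. Thus $\pi$ is injective on its domain, and by construction its codomain is $\bigcup_{i=1}^{k}\mathrm{cod}(\pi_i)$. The cardinality constraint in the definition of partial is then automatic from injectivity on a finite domain. The cleanest way to organize this step is to treat $k=2$ first and then induct, since ``uniform'' is given pairwise and the extended partial $\pi_1\vee\pi_2$ is clearly uniform with any $\pi_l$ that was uniform with both $\pi_1$ and $\pi_2$.

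It is immediate from the construction that $\pi_i\le\pi$ for every $i$: the domain and codomain inclusions are visible, and $\pi$ restricts to $\pi_i$ on $\mathrm{dom}(\pi_i)$ by definition. For minimality, suppose $\pi'$ is any partial with $\pi_i\le\pi'$ for all $i$. Then $\mathrm{dom}(\pi')\supseteq\bigcup_{i}\mathrm{dom}(\pi_i)=\mathrm{dom}(\pi)$, similarly for codomains, and $\pi'|_{\mathrm{dom}(\pi_i)}=\pi_i=\pi|_{\mathrm{dom}(\pi_i)}$ for each $i$, so $\pi'$ restricts to $\pi$ on $\mathrm{dom}(\pi)$. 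Hence $\pi\le\pi'$. Uniqueness follows from antisymmetry of $\le$: two partials that each restrict to the other must have identical domains, codomains, and values.

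The main obstacle, such as it is, lies in the injectivity check: one must be careful to use \emph{both} clauses of uniformity, since failing either would let two different inputs be sent to a common output across two different $\pi_i$'s. Everything else is essentially bookkeeping about unions of graphs of functions.
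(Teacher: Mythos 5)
Your proof is correct and follows essentially the same route as the paper's: build the join explicitly as the union of the given partials, use the first clause of uniformity for well-definedness and the second for injectivity, then get minimality from domain/codomain containment and uniqueness from antisymmetry of the order. If anything, you are more explicit than the paper about the injectivity check across overlapping domains, which is the one point the paper glosses over.
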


\begin{proof}
We at first consider the case of two partials, and we continue to use the above notation. To show the existence, we would like to construct one. If their domains are disjoint, then by the definition of uniformity, their codomain must also be disjoint, and thus we may simply put them together without losing injectivity. By definition of uniformity, one should notice that on the subset \(D\coloneq dom(j)\bigcap dom(l)\), \(j|_D=l|_D\). Thus, we can at first put \(j|_{X\backslash D}\) and \(l|_{X\backslash D}\) together and then join it with \(j|_D=l|_D\). It is clear that the order does not matter. We can inductively show that the existence is true for finitely many partials. 

To see our construction is the unique minimum one, we shall see that any partial that is bigger than all of the prescribed partials must has its domain contain the union of all those domains, and codomain contain the union of all those codomains. Since on each term of the union the partial, the prescribed one and our construction are equal, the partial can restrict to our construction, and hence is bigger than our construction. If there is another minimum one, we can see it is both bigger and smaller than our construction, and so the two must be equal.
\end{proof}

\begin{defn}[\emph{Join}]
	Given two partials \(\pi_1\) and \(\pi_2\), we will call the above construction their \textbf{join}, and use \(\pi_1\vee \pi_2\) to denote it.
\end{defn}

\begin{exmp}
	Over \(X=\{1,\,2,\,\dots,\,100\}\), we consider some examples to show how partials are joint together.
	\begin{itemize}
		\item[\(a.\)] The join of
		\[
		\left(\begin{matrix}
		1&2&4&7\\
		11&34&5&1
		\end{matrix}
		\right)
		\]
		and 
		\[
		\left(\begin{matrix}
		10&20&24&17&59&50\\
		12&35&18&16&27&77
		\end{matrix}
		\right)
		\]
		will be
		\[
		\left(\begin{matrix}
		1 &2 &4&7&10&20&24&17&59&50\\
		11&34&5&1&12&35&18&16&27&77
		\end{matrix}
		\right)
		\]
		This is the case with no repetition;
		\item[\(b.\)] The join of
		\[
		\left(\begin{matrix}
		1&2\\
		11&14
		\end{matrix}
		\right)
		\]
		and 
		\[
		\left(\begin{matrix}
		11&14\\
		1&2
		\end{matrix}
		\right)
		\]
		will be 
		\[
		\left(\begin{matrix}
		11&14&1&2\\
		1&2&11&14
		\end{matrix}
		\right)
		\]
		which is essentially a permutation (while it is not yet, because we don't know what is happening outside the subset \(\{1,2,11,14\}\));
		\item[\(c.\)] The join of
		\[
		\left(\begin{matrix}
		1&2\\
		1&2
		\end{matrix}
		\right)
		\]
		and 
		\[
		\left(\begin{matrix}
		3&4\\
		3&4
		\end{matrix}
		\right)
		\]
		will be
		\[
		\left(\begin{matrix}
		1&2&3&4\\
		1&2&3&4
		\end{matrix}
		\right)
		\]
		One thing that readers must be careful here is that
		\[
		\left(\begin{matrix}
		1&2&3&4\\
		1&2&3&4
		\end{matrix}\right)\ne \text{ either } \left(\begin{matrix}
		3&4\\
		3&4
		\end{matrix}
		\right) \text{ or } \left(\begin{matrix}
		1&2\\
		1&2
		\end{matrix}
		\right)\]
		\item[\(d.\)] The join of 
		\[
		\left(\begin{matrix}
		1&2&3&4\\
		1&2&3&4
		\end{matrix}\right)
		\]
		and
		\[
		\left(\begin{matrix}
		1&45&67&49\\
		1&20&33&35
		\end{matrix}\right)
		\]
		will be
		\[
		\left(\begin{matrix}
		1&2&3&4&45&67&49\\
		1&2&3&4&20&33&35
		\end{matrix}\right)
		\]
		\item[\(e.\)] 
		\[
		\left(\begin{matrix}
		1&2&3&4&45&67&49\\
		1&2&3&4&20&33&35
		\end{matrix}\right)
		\]
		cannot get joint together with
		\[
		\left(\begin{matrix}
		1&2&3&4&45&67&49\\
		12&22&31&41&20&33&35
		\end{matrix}\right)
		\]
		\item[\(f.\)]
		\[
		\left(\begin{matrix}
		1&2&3\\
		12&22&31
		\end{matrix}\right)
		\]
		can be extended to the cycle 
		\[
		\left(\begin{matrix}
		1&2&3&12&22&31&4&5&\cdots&100\\
		12&22&1&1&2&3&4&5&\cdots&100
		\end{matrix}\right)
		\]
	\end{itemize}
\end{exmp}

We have given a lot of examples to show how partials behave, but why do we define them? The following theorem stresses its importance.

\begin{thm}
	If we write \(X=\{s_1,\,s_2,\dots,\,s_m\}\),  \(A_l=\{s_1,\,s_2\}\), and suppose  \(\{s_i,s_j\}\ne\{s_1,s_2\} \). Then all elements in \((s_1s_i)(s_2s_j)G_l\) will have
	\[
	\left(\begin{matrix}
	s_1&s_2\\
	s_i&s_j
	\end{matrix}\right)
	\]
	or
	\[
	\left(\begin{matrix}
	s_1&s_2\\
	s_j&s_i
	\end{matrix}\right)
	\]
	as their partials, and conversely, any permutation that has one of these two as its partial must be in \((s_1s_i)(s_2s_j)G_l\). \label{partialeq}
\end{thm}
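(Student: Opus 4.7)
The plan is to exploit the direct-product structure $G_l = S_{A_l} \otimes S_{X\setminus A_l}$ with $A_l = \{s_1, s_2\}$. On the pair $\{s_1, s_2\}$, any $g \in G_l$ acts in one of exactly two ways: it either fixes $s_1$ and $s_2$ pointwise, or it swaps them. This dichotomy is the entire engine of the argument: left-multiplying by $\sigma := (s_1 s_i)(s_2 s_j)$ therefore splits the coset $\sigma G_l$ into two families, each with a completely determined behaviour on $\{s_1, s_2\}$.

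For the forward inclusion, I would pick an arbitrary $\sigma g \in \sigma G_l$ and evaluate its values at $s_1$ and $s_2$ directly. If $g$ fixes $s_1$ and $s_2$, then $\sigma g(s_1) = \sigma(s_1) = s_i$ and $\sigma g(s_2) = s_j$, producing the first partial. If instead $g$ swaps $s_1$ and $s_2$, then $\sigma g(s_1) = \sigma(s_2) = s_j$ and $\sigma g(s_2) = s_i$, producing the second. Since these two cases exhaust $G_l$, every element of $\sigma G_l$ carries one of the two listed partials.

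The converse is the more interesting direction. Given a permutation $\tau$ whose restriction to $\{s_1, s_2\}$ matches one of the two partials, the goal is to show $\sigma^{-1}\tau \in G_l$, which immediately forces $\tau \in \sigma G_l$. Using that $\sigma$ is an involution by Remark \ref{constructionofsigma}, so $\sigma^{-1} = \sigma$, a direct evaluation of $\sigma\tau(s_1)$ and $\sigma\tau(s_2)$ in each of the two partial cases shows that $\sigma\tau$ either fixes both $s_1, s_2$ pointwise (first partial) or swaps them (second partial). In either case, $\sigma\tau$ stabilises $A_l$ setwise and hence belongs to $G_l$, as required.

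The only care I anticipate is the interpretation of the notation $(s_1 s_i)(s_2 s_j)$ when $\{s_i, s_j\}$ shares an element with $\{s_1, s_2\}$; the expression is then not literally a product of two disjoint transpositions, and following Remark \ref{constructionofsigma} one must read $\sigma$ as acting only on the symmetric difference of $A_l$ and $\{s_i, s_j\}$. A brief case check confirms that the same substitution argument still goes through, with the two candidate partials collapsing appropriately under the action of the $S_{A_l}$ factor of $G_l$. Beyond this bookkeeping, I do not expect any real obstacle: the whole theorem reduces to a direct evaluation at the two distinguished points $s_1, s_2$, using only that $G_l$ fixes $A_l$ setwise.
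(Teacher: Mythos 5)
Your proof is correct, and its skeleton is the same as the paper's: both arguments hinge on the decomposition \(G_l=\mathcal{S}_{\{s_1,s_2\}}\otimes\mathcal{S}_{\{s_3,\dots,s_m\}}\), i.e.\ on the fact that every \(g\in G_l\) either fixes or swaps \(s_1,s_2\), and your converse (show \(\sigma^{-1}\tau\) fixes \(A_l\) setwise, hence lies in \(G_l\), hence \(\tau\in\sigma G_l\)) is essentially verbatim the paper's Part~III. The genuine difference is in the forward direction: the paper computes the entire cycle decomposition of \((s_1s_i)(s_2s_j)g\), splitting into sub-cases according to whether \(s_i\) and \(s_j\) lie in the same cycle of \(g\) or in different ones, and then treats the overlap \(s_2=s_j\) in a separate Part~II; you observe that a partial on \(\{s_1,s_2\}\) only records the two values \(\sigma g(s_1)\) and \(\sigma g(s_2)\), so a pointwise evaluation suffices and is uniform across all cases. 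That is a real economy --- the cycle bookkeeping in the paper is irrelevant to the conclusion being extracted. One small correction: your appeal to ``\(\sigma\) is an involution, so \(\sigma^{-1}=\sigma\)'' is unnecessary and is literally false for the written product \((s_1s_i)(s_2s_j)\) when the transpositions overlap (e.g.\ \(s_i=s_2\) yields a \(3\)-cycle); all your converse actually uses is \(\sigma^{-1}(s_i)=s_1\) and \(\sigma^{-1}(s_j)=s_2\), which hold for any \(\sigma\) sending \(s_1\mapsto s_i\) and \(s_2\mapsto s_j\), so it is cleaner to phrase it that way. You do flag this interpretive issue about the notation at the end, which is the right instinct --- the paper itself is loose on exactly this point.
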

\begin{proof}
	\begin{itemize}
		\item[\textbf{I.}] 	We at first assume that \(s_1\ne s_i\) and \(s_2\ne s_j\). Notice that \(G_l=\mathcal{S}_{\{s_1,s_2\}}\otimes\mathcal{S}_{\{s_3,\,\dots,\,s_m\}}\), then we have for \(g\in G_l\)
		\begin{itemize}
			\item[\(i.\)] If \(g\in \mathcal{S}_{\{s_3,\,\dots,\,s_m\}}\), then if \(s_i\) and \(s_j\) are in the same cycle
			\[
			\begin{split}
			(s_1s_i)(s_2s_j)g&=(s_1s_i)(s_2s_j)(s_i\cdots s_j\cdots)\cdots(\cdots)\\
			&=(s_1s_i\cdots s_2s_j\cdots)\cdots(\cdots)
			\end{split}
			\]
			
			And if \(s_i\) and \(s_j\) are in different cycles, then
			\[
			\begin{split}
			(s_1s_i)(s_2s_j)g&=(s_1s_i)(s_2s_j)(s_i\cdots)(s_j\cdots)\cdots(\cdots)\\
			&=(s_1s_i\cdots)(s_2s_j\cdots)\cdots(\cdots)
			\end{split}
			\]
			
			so both of them have
			\[\left(\begin{matrix}
			s_1&s_2\\
			s_i&s_j
			\end{matrix}\right)\]
			as their partials;
			
			\item[\(ii.\)] If \(g\in (s_1s_2) \mathcal{S}_{\{s_3,\,\dots,\,s_m\}}\), then if \(s_i\) and \(s_j\) are in the same cycle
			\[
			\begin{split}
			(s_1s_i)(s_2s_j)g&=(s_1s_i)(s_2s_j)(s_1s_2)(s_i\cdots s_j\cdots)\cdots(\cdots)\\
			&=(s_1s_j\cdots s_2s_i\cdots)\cdots(\cdots)
			\end{split}
			\]
			
			And if \(s_i\) and \(s_j\) are in different cycles, then
			\[
			\begin{split}
			(s_1s_i)(s_2s_j)g&=(s_1s_i)(s_2s_j)(s_1s_2)(s_i\cdots)(s_j\cdots)\cdots(\cdots)\\
			&=(s_1s_j\cdots)(s_2s_i\cdots)\cdots(\cdots)
			\end{split}
			\]
			
			so both of them have
			\[\left(\begin{matrix}
			s_1&s_2\\
			s_j&s_i
			\end{matrix}\right)\]
			as their partials;
		\end{itemize}
		\item[\textbf{II.}] Without loss of generality we may assume that \(s_1\ne s_i\) and \(s_2=s_j\), and so \(s_j\notin \{s_3,\,\dots,\,s_m\}\).
		\begin{itemize}
			\item[\(i.\)] If \(g\in \mathcal{S}_{\{s_3,\,\dots,\,s_m\}}\), then
			\[
			\begin{split}
			(s_1s_i)(s_2s_j)g&=(s_1s_i)(s_2s_j)(s_i\cdots)\cdots(\cdots)\\
			&=(s_1s_i\cdots)(s_2s_j)\cdots(\cdots)
			\end{split}
			\]	
			so
			\[\left(\begin{matrix}
			s_1&s_2\\
			s_i&s_j
			\end{matrix}\right)\]
			is its partial;
			
			\item[\(ii.\)] If \(g\in (s_1s_2)\mathcal{S}_{\{s_3,\,\dots,\,s_m\}}\), then
			\[
			\begin{split}
			(s_1s_i)(s_2s_j)g&=(s_1s_i)(s_2s_j)(s_1s_2)(s_i\cdots)\cdots(\cdots)\\
			&=(s_1s_js_2s_i\cdots)\cdots(\cdots)\\
			&=(s_1s_2s_i\cdots)\cdots(\cdots)
			\end{split}
			\]	
			so
			\[\left(\begin{matrix}
			s_1&s_2\\
			s_j&s_i
			\end{matrix}\right)\]
			is its partial.
		\end{itemize}
		\item[\textbf{III.}] For a permutation \(\sigma\in \mathcal{S}_X\), if it has
		\[
		\left(\begin{matrix}
		s_1&s_2\\
		s_i&s_j
		\end{matrix}\right)
		\]
		as its partial, then \((s_2s_j)(s_1s_i)\sigma(s_1)=s_1\), and \((s_2s_j)(s_1s_i)\sigma(s_2)=s_2\). Thus \((s_2s_j)(s_1s_i)\sigma\) must fix the edge \(A_l\), and so \((s_2s_j)(s_1s_i)\sigma\in G_l\), which implies that \(\sigma\in (s_1s_i)(s_2s_j)G_l\). The second case is similar to prove.
	\end{itemize}
\end{proof}

\begin{defn}[\emph{Partial Equivalence}]
	For \(\sigma,\,\tau\) in \(\mathcal{S}_X\), we may call them \textbf{partially equivalent} relative to \(\pi\) if \(\sigma\) and \(\tau\) share the same partial \(\pi\). In this momoent, we will use \(\sigma\simeq_{\pi}\tau\) to denote the relation.
\end{defn}

\begin{prop}
	The above definition is exactly an equivalence relation.
\end{prop}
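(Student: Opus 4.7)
The plan is to verify the three defining properties of an equivalence relation—reflexivity, symmetry, and transitivity—directly from the definition. Since $\sigma \simeq_\pi \tau$ asserts only that both $\sigma$ and $\tau$ restrict to the same prescribed partial $\pi$, i.e.\ that $\sigma|_{\mathrm{dom}(\pi)} = \pi$ and $\tau|_{\mathrm{dom}(\pi)} = \pi$, the relation is of the general form ``both $\sigma$ and $\tau$ share a common feature''. Any such relation is automatically an equivalence relation on the set of objects possessing the feature, so the proof becomes a matter of unwinding definitions.

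First I would verify reflexivity: any permutation $\sigma$ admitting $\pi$ as a partial obviously shares that partial with itself, so $\sigma \simeq_\pi \sigma$. Symmetry is even more transparent, as the requirement that $\sigma$ and $\tau$ share the partial $\pi$ is visibly symmetric in $\sigma$ and $\tau$. For transitivity, suppose $\sigma \simeq_\pi \tau$ and $\tau \simeq_\pi \rho$; then by the definition, $\sigma$ and $\rho$ both restrict to $\pi$ on $\mathrm{dom}(\pi)$, which is exactly $\sigma \simeq_\pi \rho$.

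The proof has no genuine obstacle; the only interpretive care needed is to fix the domain of $\simeq_\pi$. Strictly, a permutation $\sigma$ that does not admit $\pi$ as a partial is never related to anything (not even itself) under $\simeq_\pi$, so the cleanest formulation is to regard $\simeq_\pi$ as an equivalence relation on the set $\{\sigma \in \mathcal{S}_X : \sigma|_{\mathrm{dom}(\pi)} = \pi\}$, which is nonempty by the earlier proposition that every partial extends to a permutation. On this set the three axioms hold trivially, and the equivalence class of any $\sigma$ is precisely the collection of all permutations agreeing with $\sigma$ on $\mathrm{dom}(\pi)$—a fact that will be useful when we later connect these classes to the cosets $\sigma_{ij}G_l$ via \emph{Theorem} \ref{partialeq}.
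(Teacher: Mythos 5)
Your proof is correct; the paper in fact supplies no proof at all for this proposition, treating it as immediate, and your direct verification of reflexivity, symmetry, and transitivity is exactly the standard argument the authors evidently had in mind. Your added observation that reflexivity forces one to restrict $\simeq_\pi$ to the set of permutations actually admitting $\pi$ as a partial (nonempty by the extension proposition) is a genuine point of care that the paper glosses over, and it is the right way to make the statement literally true.
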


From now on, for a partial \(\pi\) we call \([\pi]\) the \(\pi\)-\emph{class}, collecting all the permutations in \(S_X\) that have \(\pi\) as their partials. Therefore, \emph{Theorem} \ref{partialeq} in fact tells us that 
\[
(s_1s_i)(s_2s_j)G_l=\left[\left(\begin{matrix}
s_1&s_2\\
s_j&s_i
\end{matrix}\right)\right]\bigsqcup\left[\left(\begin{matrix}
s_1&s_2\\
s_i&s_j
\end{matrix}\right)\right].
\]

\noindent Now, return to the matrix
\[
\mathcal{M}_\Gamma=
\left(\begin{matrix}
\sigma_{11}G_1 & \sigma_{12}G_1 & \cdots&\sigma_{1n}G_1\\
\sigma_{21}G_2 & \sigma_{22}G_2 & \cdots&\sigma_{2n}G_2\\
\vdots&  \vdots&  \ddots&\vdots\\
\sigma_{n1}G_n & \sigma_{n2}G_n & \cdots &\sigma_{nn}G_n
\end{matrix}
\right)
\]
One must observe this matrix gives everything about the automorphisms of a given graph, so our aim is to extract its information in a more elegant way. To this end, we would like to introduce the algebraic structure of partials. In the following context, we will call \(\mathcal{M}_\Gamma\) the \emph{Canonical Matrix} of \(\Gamma\). 

\begin{defn}[\emph{Brics}]
	For the set of ground points \(X=\{s_1,\,s_2,\,\dots,\,s_m\}\), and two variables \(\mu,\nu\) over \(X\), we would call \(\left(\begin{matrix}
	\mu\\
	\nu
	\end{matrix}\right)\) a \textbf{free bric} over \(X\). If we input \(\mu=s_i\) and \(\nu=s_j\), we would call \(\left(\begin{matrix}
	s_i\\
	s_j
	\end{matrix}\right)\) a \textbf{bric} over \(X\).
\end{defn}

\noindent Here one must notice \(\left(\begin{matrix}
s_i\\
s_i
\end{matrix}\right)\ne\left(\begin{matrix}
s_i^\prime\\
s_i^\prime
\end{matrix}\right)\), if \(s_i\ne s_i^\prime\). 

\noindent\textbf{Brainstorm}: In fact, brics are \(1\)-partials over \(X\), and we want operations like \emph{addition} to say that some function share either this partial or that one, and \emph{multiplication} to integrate information of brics by using \emph{joins}. One should notice that free brics behave in the same way of brics.

We start with \(\omega\coloneq
\{\nu_\lambda\}_{\lambda\in\Lambda}\), a collection of symbols, representing free variables over \(X=\{s_1,\,\dots,\,s_m\}\). Then we define 
\[
\omega^\beta\coloneq\left\{\left(\begin{matrix}
\nu_i\\
\nu_j
\end{matrix}\right)\,;\,\nu_i,\,\nu_j\in\omega\right\}
\]
a collection of free brics over \(X\) based on \(\omega\). Now, we view elements in \(\omega^\beta\) as variables over \(\mathbb{F}_2\), and generate a collection \(\mathbb{F}_2(\omega^\beta)\) of multi-variable polynomials over this field, which is in fact a linear space over \(\mathbb{F}_2\). On this \(\mathbb{F}_2(\omega^\beta)\), we will quotient the following relations:
\begin{itemize}
	\item[\(a.\)]  For \(\pi\in \omega^\beta\), \(\pi^2=\pi\);
	\item[\(b.\)]  For \(\pi_1,\,\pi_2\in\omega^\beta\), \(\pi_1\pi_2=0\) if their upper rows are equal but the lower rows are not, or the upper row are not equal but the lower rows are.
\end{itemize}

\begin{prop}
	Let \[\mathcal{I}_1\coloneq
	<\pi^2-\pi\,;\,\pi\in\omega^\beta>\] and \[\mathcal{I}_2\coloneq<\pi_1\pi_2\,;\,\pi_1,\,\pi_2\in\omega^\beta \text{ that satisfy the properties in }b.>\]
	Then, \(\mathcal{I}_1\) and \(\mathcal{I}_2\) are ideals in \(\mathbb{F}_2(\omega^\beta)\) corresponding to the two relations above respectively, and so if we quotient the relations in \(a.\) and \(b.\), we obtain a ring.
\end{prop}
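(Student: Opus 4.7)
The statement is essentially a bookkeeping exercise in elementary commutative algebra, so the plan is to first identify the ambient structure precisely and then invoke the standard facts about generated ideals and quotient rings; almost no genuine content has to be verified.

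First I would interpret $\mathbb{F}_2(\omega^\beta)$ as the commutative polynomial ring $\mathbb{F}_2[\omega^\beta]$ with the elements of $\omega^\beta$ as indeterminates over $\mathbb{F}_2$, equipped with the usual addition and multiplication. With this interpretation in hand, the angle bracket notation $\langle \cdot \rangle$ simply denotes the ideal generated by a subset, so the fact that $\mathcal{I}_1$ and $\mathcal{I}_2$ are ideals is true by construction. To make this fully explicit and match the author's informal language, I would note that a typical element of $\mathcal{I}_1$ is a finite $\mathbb{F}_2$-linear combination of expressions $f\cdot(\pi^2-\pi)$ with $f\in\mathbb{F}_2(\omega^\beta)$ and $\pi\in\omega^\beta$, and similarly a typical element of $\mathcal{I}_2$ is a finite $\mathbb{F}_2$-linear combination of expressions $f\cdot \pi_1\pi_2$ where $(\pi_1,\pi_2)$ is a colliding pair as in (b); closure under addition and under multiplication by arbitrary elements of the ambient ring is immediate from these descriptions.

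For the second part, the plan is to form the combined ideal $\mathcal{I}:=\mathcal{I}_1+\mathcal{I}_2$, observe that the sum of two ideals in a commutative ring is again an ideal, and then invoke the standard quotient-ring construction to conclude that $\mathbb{F}_2(\omega^\beta)/\mathcal{I}$ is a well-defined ring. In this quotient, the images of $\pi^2-\pi$ and of every colliding product $\pi_1\pi_2$ are zero, so the relations $(a)$ and $(b)$ hold as equalities there; this is exactly what it means to ``quotient the relations in $a.$ and $b.$''.

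The only mild subtlety worth flagging is that the index set $\Lambda$ may be infinite, so $\mathbb{F}_2(\omega^\beta)$ is a polynomial ring in possibly infinitely many indeterminates. This causes no trouble, since polynomials are by convention finite sums and the generated-ideal construction places no cardinality restriction on the generating set. I do not expect any genuine obstacle in this proof; if anything, the work lies in agreeing on the convention for $\mathbb{F}_2(\omega^\beta)$ and then a single appeal to the universal property of the polynomial ring modulo an ideal suffices.
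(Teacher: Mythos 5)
Your proposal is correct and matches the paper's intent exactly: the paper's own proof is the single word ``Trivial,'' and your argument is just the standard spelling-out of that triviality (generated subsets are ideals by construction, the quotient by $\mathcal{I}_1+\mathcal{I}_2$ imposes both relations). Nothing is missing and nothing differs in substance.
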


\begin{proof}
	Trivial.
\end{proof}
\noindent We denote what we will obtain after the quotient \(\Pi_\omega(X)\).

\begin{defn}[\emph{Ring of Free Partials}]
	We will call \(\Pi_\omega(X)\) the \textbf{Ring of Free Partials over}  \(X\).
\end{defn}

In fact, if we simply put \(\omega\coloneq X\), we obtain a ring generated from a ring of \(m^2\)-variable polynomials over \(\mathbb{F}_2\). 

\begin{defn}[\emph{Ring of Partials}]
	We will call this ring the \textbf{Ring of Partials over} \(X\), denoted by \(\Pi(X)\).
\end{defn}

\begin{prop}
	In \(\Pi(X)\), if we denote its multiplication by \(\ast\), and then we shall see for two brics \(\pi_1\) and \(\pi_2\)
	\[
	\pi_1\ast\pi_2=\begin{cases}
	\pi_1\vee\pi_2 & \pi_1\text{ and }\pi_2 \text{ are uniform};\\
	0 & \text{ otherwise}.
	\end{cases}
	\]
	Since the brics span the whole space, the multiplication in \(\Pi(X)\) is essentially ``\(\vee\)".
\end{prop}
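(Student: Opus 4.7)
The plan is to proceed by direct case analysis on the two brics $\pi_1=\left(\begin{matrix}s_i\\ s_j\end{matrix}\right)$ and $\pi_2=\left(\begin{matrix}s_k\\ s_l\end{matrix}\right)$, matching the three cases in the definition of uniformity against the two defining relations of $\Pi(X)$. Since the proposition is really asserting that the imposed ideals $\mathcal{I}_1$ and $\mathcal{I}_2$ were chosen exactly so that ring multiplication implements the join, the bulk of the work is bookkeeping.

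First, if $s_i=s_k$ and $s_j=s_l$, then $\pi_1=\pi_2$; the two brics are trivially uniform with $\pi_1\vee\pi_2=\pi_1$, and the relation $\pi^2=\pi$ from $\mathcal{I}_1$ yields $\pi_1\ast\pi_2=\pi_1$. Second, if exactly one of the equalities $s_i=s_k$ or $s_j=s_l$ holds (while the other fails), then $\pi_1$ and $\pi_2$ collide in the sense of the uniformity definition, and the generators of $\mathcal{I}_2$ kill the product, giving $0$ on both sides. Third, if $s_i\neq s_k$ and $s_j\neq s_l$, then $\pi_1$ and $\pi_2$ are uniform with join equal to the $2$-partial $\left(\begin{matrix}s_i & s_k\\ s_j & s_l\end{matrix}\right)$; under the identification of a square-free monomial of pairwise uniform brics with the partial those brics assemble into, the ring product $\pi_1\pi_2$ is exactly this $2$-partial.

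For the closing remark that multiplication in $\Pi(X)$ is essentially ``$\vee$'', I would argue that the relation $\pi^2=\pi$ lets us reduce any polynomial representative to a square-free form, so every element of $\Pi(X)$ is an $\mathbb{F}_2$-linear combination of square-free monomials in brics. Distributivity of ring multiplication over addition, combined with the bric-level formula just established, then propagates the identity $(\sum_i \alpha_i)\ast(\sum_j\beta_j)=\sum_{i,j}\alpha_i\vee\beta_j$ (with non-uniform pairs contributing $0$) to arbitrary elements.

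The one step that deserves care, and that I expect to be the main obstacle, is giving a rigorous meaning to the identification between a square-free monomial of mutually uniform brics and the partial it encodes. The cleanest way I see is to check that the classes of such monomials, together with $0$, form an $\mathbb{F}_2$-basis of $\Pi(X)$: using $\mathcal{I}_1$ to square-reduce and $\mathcal{I}_2$ to send every non-uniform monomial to $0$, one shows that distinct sets of pairwise uniform brics remain $\mathbb{F}_2$-linearly independent in the quotient. Once this normal form is pinned down, the bijection between non-zero square-free monomials and partials is automatic, and it intertwines ring multiplication with $\vee$ by exactly the three cases above.
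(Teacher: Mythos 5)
Your case analysis is correct and matches what the paper intends: the paper's entire ``proof'' here is the phrase ``After the construction,'' i.e.\ unwind the relations defining \(\mathcal{I}_1\) and \(\mathcal{I}_2\) exactly as you do (idempotence when the two brics coincide, annihilation by \(\mathcal{I}_2\) when exactly one row agrees, and a genuine square-free monomial when both rows differ). The identification you flag as the main obstacle---reading a square-free monomial of pairwise uniform brics as the partial they assemble into---is treated by the paper as purely notational and is only vindicated later by the isomorphism \(\Pi(X)\cong\mathbb{P}(S_X)\), under which \(\pi_1\pi_2\mapsto[\pi_1]\cap[\pi_2]=[\pi_1\vee\pi_2]\); your proposed normal-form/basis argument would make that step honest where the paper leaves it implicit.
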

\begin{proof}
	After the construction.
\end{proof}

\begin{prop}
	\[
\begin{split}
	\Pi(X)&=\left\{\sum_l\prod_{k}\pi_l^k+c,\,mod<\mathcal{I}_1,\mathcal{I}_2>\,;\, \pi_l^k\in\omega^\beta(X), c=0\text{ or }1\right\}\\
	&=\left\{\sum_{l} \bigvee_{k}\pi_l^k+c\,\,;\, 
		\pi_l^k\in \omega^\beta(X),\,c=1\text{ or }0. 
		\right\}\bigcup\{0,1\}
\end{split}
	\]
\end{prop}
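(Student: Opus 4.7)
The plan is to reduce an arbitrary polynomial representative of an element of $\Pi(X)$ to the stated normal form by systematically applying the two defining relations. The first equality in the proposition is essentially a restatement of the quotient construction: by definition, every element of $\Pi(X)$ is represented by a polynomial in $\mathbb{F}_2(\omega^\beta)$, that is, an $\mathbb{F}_2$-sum of monomials in brics together with a constant in $\{0,1\}$. The content of the proposition therefore lies in the second equality, which rewrites each monomial as a join.

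The argument proceeds monomial by monomial. First, for each monomial $\prod_k \pi_l^k$, I would apply $\pi^2 = \pi$ from $\mathcal{I}_1$ repeatedly, so that every bric appears at most once; this puts each monomial in square-free form. Second, I would invoke the preceding proposition that identifies multiplication on brics: $\pi_1 \ast \pi_2$ equals $\pi_1 \vee \pi_2$ if the two brics are uniform and $0$ otherwise. Applying this pairwise across the factors of the square-free monomial, the product either collapses to $0$ (as soon as two factors conflict in having equal upper rows but distinct lower rows or vice versa, which is exactly the defining condition of $\mathcal{I}_2$) or equals the join $\bigvee_k \pi_l^k$.

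Assembling the $\mathbb{F}_2$-sum, the surviving monomials form a sum of joins of brics, and any two coincident joins cancel in pairs mod $2$. Together with the original constant $c$, this is precisely the form on the right-hand side. The reverse containment is immediate: each $\bigvee_k \pi_l^k$ is visibly in $\Pi(X)$ as the image of $\prod_k \pi_l^k$ under the quotient map, and the clause $\cup\,\{0,1\}$ handles the degenerate cases in which the outer sum is empty.

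The main obstacle is essentially bookkeeping rather than an idea: one must keep clear the distinction between the formal multiplication in the polynomial ring $\mathbb{F}_2(\omega^\beta)$ and the geometric join operation $\vee$ on partials in the quotient. The bridge between the two is supplied entirely by the previous proposition, so no further machinery is needed beyond careful application of the relations in $\mathcal{I}_1$ and $\mathcal{I}_2$.
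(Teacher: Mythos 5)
Your proposal is correct and follows essentially the same route as the paper, which dismisses the statement as immediate ``after the construction'': the first equality is the definition of the quotient, and the second follows from square-freeing monomials via $\mathcal{I}_1$ and identifying the surviving products with joins via the multiplication rule $\pi_1\ast\pi_2=\pi_1\vee\pi_2$ (or $0$). Your version simply makes explicit the bookkeeping the paper leaves implicit.
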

\begin{proof}
	After the construction.
\end{proof}

 In fact, there is another definition of ring of partials, which is defined in a more concrete way, but the two are essentially the same. Now, let me introduce the second definition.

We start with the collection of all subsets of \(S_X\), denoted \(\mathbb{P}(S_X)\). In this collection, we define for \(A,B\in\mathbb{P}(S_X)\)
\[
\begin{split}
&A\ast B\coloneq A\bigcap B\\
&A+B\coloneq (A\backslash B)\bigcup (B\backslash A )
\end{split}
\]
It is interesting to recall the knowledge of ``Boolean Algebra" to know that \((\mathbb{P}(\mathcal{S}_X),\ast,+)\) becomes an algebra over the field \(\mathbb{F}_2\).

In \(\mathbb{P}(S_X)\), we collect all the partial classes over \(X\)
\[
\mathcal{PC}\coloneq \{[\pi]\in\mathbb{P}(S_X)\,;\,\pi \text{ is a partial over }X\}
\]
One thing we must notice is that elements in \(\mathcal{PC}\) may not be disjoint, but this doesn't matter at all. 

\vspace*{0.2cm}

\noindent Since the intersection of two subalgebras of \(\mathbb{P}(S_X)\) is still an algebra, it is reasonable to consider the smallest subalgebra of \(\mathbb{P}(S_X)\) that contain \(\mathcal{PC}\). We will call this subalgebra \emph{the second definition of the Ring of Partials over }\(X\), denoted by \(\Pi^\prime(X)\). Observe that every permutation is per se a partial, and hence any subset of \(S_X\) is a sum of partial classes, and thus \(\mathcal{PC}\) spans the whole \(\mathbb{P}(S_X)\), and so the so-called \(\Pi^\prime(X)\) is nothing but \(\mathbb{P}(S_X)\). The only difference is that, we consider it in a new way.

\begin{thm}
	There is an isomorphism from \(\Pi(X)\) to \(\Pi^\prime(X)\). \label{isotwodef}
\end{thm}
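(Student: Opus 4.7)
The plan is to construct an explicit ring homomorphism $\phi \colon \Pi(X) \to \Pi^\prime(X)$ on generators and then show it is bijective. I would define $\phi$ on brics by
\[
\phi\left(\begin{matrix} s_i \\ s_j \end{matrix}\right) = \left[\left(\begin{matrix} s_i \\ s_j \end{matrix}\right)\right],
\]
the partial class inside $\mathbb{P}(S_X)$, and extend $\mathbb{F}_2$-linearly and multiplicatively from the polynomial ring $\mathbb{F}_2(\omega^\beta)$. The first step is to check that $\phi$ descends through the ideal $\langle \mathcal{I}_1,\mathcal{I}_2\rangle$: relation $a$ follows from $[\pi]\cap[\pi]=[\pi]$, and relation $b$ follows because two brics that agree on the upper row but disagree on the lower row (or vice versa) impose contradictory constraints on any permutation, so their partial classes are disjoint and their intersection is empty.

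Next I would verify that $\phi$ is a ring homomorphism on the quotient. Linearity and compatibility with $+$ are immediate from the definition of addition on each side ($\mathbb{F}_2$-sum of polynomials on one side, symmetric difference of subsets on the other). For multiplication, the preceding proposition says that the product in $\Pi(X)$ coincides with the join $\vee$ on uniform brics and vanishes otherwise, while joining two partials produces exactly the partial whose class is the intersection of the two original classes; hence $\phi(\pi_1\ast\pi_2) = \phi(\pi_1)\ast\phi(\pi_2)$ in every case.

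For surjectivity I would invoke the observation already made in the excerpt: each permutation $\sigma\in S_X$ is itself an $m$-partial whose partial class is the singleton $\{\sigma\}$, and this singleton equals $\phi$ applied to the product of the $m$ brics $\left(\begin{smallmatrix} s_i \\ \sigma(s_i) \end{smallmatrix}\right)$ for $i=1,\dots,m$. Since the singletons $\mathbb{F}_2$-span all of $\mathbb{P}(S_X) = \Pi^\prime(X)$, the map $\phi$ is surjective.

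The main obstacle will be injectivity. Using the canonical form $\sum_l \bigvee_k \pi_l^k + c$ established in the preceding proposition, one needs to show that two distinct canonical sums cannot map to the same subset of $S_X$. The cleanest route I envisage is to build a candidate inverse $\psi \colon \Pi^\prime(X) \to \Pi(X)$ sending $\{\sigma\} \mapsto \prod_{i=1}^m \left(\begin{smallmatrix} s_i \\ \sigma(s_i) \end{smallmatrix}\right)$, verify $\phi\circ\psi=\mathrm{id}$ directly from the surjectivity argument, and then argue $\psi\circ\phi=\mathrm{id}$. The second identity is the delicate point, because it amounts to showing that each lower-order bric-product in $\Pi(X)$ can be rewritten as the $\mathbb{F}_2$-sum of all full-permutation monomials extending it; this is a combinatorial identity that will need to be carefully extracted from the interaction of relations $a$ and $b$ with the canonical-form reduction, and it is where I expect the real work of the proof to lie.
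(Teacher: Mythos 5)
Your map is exactly the paper's $\Lambda$ (the paper defines it on canonical forms by $\sum\bigvee\pi\mapsto\sum\left[\bigvee\pi\right]$ and dismisses the verification as ``easy to check''), and your treatment of well-definedness, the homomorphism property, and surjectivity is correct and more careful than the paper's. But the step you flagged as ``where the real work lies'' --- injectivity, i.e.\ the identity $\psi\circ\phi=\mathrm{id}$ --- is not merely delicate: it fails, so the proof cannot be completed and the theorem as stated is false for the ring $\Pi(X)$ as the paper defines it. The quotient of $\mathbb{F}_2(\omega^\beta)$ by $\langle\mathcal{I}_1,\mathcal{I}_2\rangle$ has as an $\mathbb{F}_2$-basis the nonzero squarefree monomials, which are exactly the uniform sets of brics, i.e.\ the partials on $X$; nothing in $\mathcal{I}_1$ or $\mathcal{I}_2$ identifies a lower-order partial with the sum of its full-permutation extensions. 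Hence $\dim_{\mathbb{F}_2}\Pi(X)=\sum_{p=0}^{m}\binom{m}{p}^2 p!$, while $\dim_{\mathbb{F}_2}\Pi^\prime(X)=\dim_{\mathbb{F}_2}\mathbb{P}(S_X)=m!$, and these disagree already for $m=2$ ($7$ versus $2$). Concretely, for $X=\{1,2\}$ the elements $\left(\begin{smallmatrix}1\\1\end{smallmatrix}\right)$, $\left(\begin{smallmatrix}2\\2\end{smallmatrix}\right)$ and $\left(\begin{smallmatrix}1\\1\end{smallmatrix}\right)\left(\begin{smallmatrix}2\\2\end{smallmatrix}\right)$ are three distinct basis monomials of $\Pi(X)$, yet all three map to the singleton $\{\mathrm{id}\}$, so $\left(\begin{smallmatrix}1\\1\end{smallmatrix}\right)+\left(\begin{smallmatrix}2\\2\end{smallmatrix}\right)$ is a nonzero element of the kernel.

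What survives of your argument is that $\Lambda$ is a surjective ring homomorphism, which is in fact all the later sections of the paper use (they only ever evaluate determinant expressions through $\Lambda$ into $\mathbb{P}(S_X)$). To salvage the theorem one must either replace $\Pi(X)$ by $\Pi(X)/\ker\Lambda$, or enlarge the defining ideal to include the relations your $\psi\circ\phi=\mathrm{id}$ step would require, namely that each partial monomial equals the $\mathbb{F}_2$-sum of all full-permutation monomials extending it. Your instinct to isolate exactly this identity as the crux was the right one; the answer is that it does not follow from relations $a$ and $b$, and must be imposed.
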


\begin{proof}
	We define
	\[
	\begin{split}
	\Lambda: \Pi(X)&\longrightarrow\Pi^\prime(X)\\
	\sum\bigvee\pi &\longmapsto \sum\left[\bigvee\pi\right]\\
	0 &\longmapsto \emptyset\\
	1 &\longmapsto S_X
	\end{split}
	\]
	and it is easy to check this is an isomorphism.
\end{proof}

\section{Graphs and Matrices}

From previous section we have defined over \(X\) the ring of partials. Here we will not distinguish the two definitions.

Suppose we have a graph \(\Gamma=\{A_1,\,\dots,\,A_n\}\), and \(A_l=\{\mu_l,\,\nu_l\}\) for each \(1\le l\le n\). Here \(\mu_l\) and \(\nu_l\) are variables over \(X\), and so we can use these variables to construct free brics. That is to say, we may construct a ring of free partials by \(\Gamma\), which is denoted by \(\Pi_\Gamma\). If we input the values of \(X\) to the variables, then \(\Pi_\Gamma\) will be well deformed into \(\Pi(X)\), and so if we have the exact data of the graph, we will simply consider \(\Pi(X)\) as our ambient ring. But for the rest of the time, we would still use \(\mu_l,\,\nu_l\) to denote the elements in \(A_l\) for each \(l\).

Recall the matrix 
\[
\begin{split}
\mathcal{M}_\Gamma&=
\left(\begin{matrix}
\sigma_{11}G_1 & \sigma_{12}G_1 & \cdots&\sigma_{1n}G_1\\
\sigma_{21}G_2 & \sigma_{22}G_2 & \cdots&\sigma_{2n}G_2\\
\vdots&  \vdots&  \ddots&\vdots\\
\sigma_{n1}G_n & \sigma_{n2}G_n & \cdots &\sigma_{nn}G_n
\end{matrix}
\right)\\
&={\small\left(\begin{matrix}
	\left[\left(\begin{matrix}
	\mu_1&\nu_1\\
	\mu_1&\nu_1
	\end{matrix}\right)\right]\bigsqcup\left[\left(\begin{matrix}
	\mu_1&\nu_1\\
	\nu_1&\mu_1
	\end{matrix}\right)\right] &  \cdots&\left[\left(\begin{matrix}
	\mu_1&\nu_1\\
	\mu_n&\nu_n
	\end{matrix}\right)\right]\bigsqcup\left[\left(\begin{matrix}
	\mu_1&\nu_1\\
	\nu_n&\mu_n
	\end{matrix}\right)\right]\\
	\left[\left(\begin{matrix}
	\mu_2&\nu_2\\
	\mu_1&\nu_1
	\end{matrix}\right)\right]\bigsqcup\left[\left(\begin{matrix}
	\mu_2&\nu_2\\
	\nu_1&\mu_1
	\end{matrix}\right)\right] &  \cdots&\left[\left(\begin{matrix}
	\mu_2&\nu_2\\
	\mu_n&\nu_n
	\end{matrix}\right)\right]\bigsqcup\left[\left(\begin{matrix}
	\mu_2&\nu_2\\
	\nu_n&\mu_n
	\end{matrix}\right)\right]\\
	\vdots&    \ddots&\vdots\\
	\left[\left(\begin{matrix}
	\mu_n&\nu_n\\
	\mu_1&\nu_1
	\end{matrix}\right)\right]\bigsqcup\left[\left(\begin{matrix}
	\mu_n&\nu_n\\
	\nu_1&\mu_1
	\end{matrix}\right)\right] &  \cdots &\left[\left(\begin{matrix}
	\mu_n&\nu_n\\
	\mu_n&\nu_n
	\end{matrix}\right)\right]\bigsqcup\left[\left(\begin{matrix}
	\mu_n&\nu_n\\
	\nu_n&\mu_n
	\end{matrix}\right)\right]
	\end{matrix}
	\right)}
\end{split}
\]  

\noindent Now, if we define 
\[
\alpha_j^i \coloneq \left(\begin{matrix}
\mu_i\\
\mu_j
\end{matrix}\right) \text{ and }
\beta_j^i \coloneq \left(\begin{matrix}
\nu_i\\
\nu_j
\end{matrix}\right)
\] 
and
\[
\zeta_j^i \coloneq \left(\begin{matrix}
\nu_i\\
\mu_j
\end{matrix}\right) \text{ and }
\gamma_j^i \coloneq \left(\begin{matrix}
\mu_i\\
\nu_j
\end{matrix}\right)
\]

\noindent With the abuse of notations, we can simply write
\begin{equation}
\mathcal{M}_\Gamma={\small\left(\begin{matrix}
	\alpha_1^1\beta_1^1+\gamma_1^1\zeta_1^1 & \alpha_2^1\beta_2^1+\gamma_2^1\zeta_2^1 & \cdots&\alpha_n^1\beta_n^1+\gamma_n^1\zeta_n^1\\
	\alpha_1^2\beta_1^2+\gamma_1^2\zeta_1^2 & \alpha_2^2\beta_2^2+\gamma_2^2\zeta_2^2 & \cdots&\alpha_n^2\beta_n^2+\gamma_n^2\zeta_n^2\\
	\vdots&  \vdots&  \ddots&\vdots\\
	\alpha_1^n\beta_1^n+\gamma_1^n\zeta_1^n & \alpha_2^n\beta_2^n+\gamma_2^n\zeta_2^n & \cdots &\alpha_n^n\beta_n^n+\gamma_n^n\zeta_n^n
	\end{matrix}
	\right)} \label{canonicalmatrix}
\end{equation}
and thus we obtain a square matrix over the ring \(\Pi(X)\). When talking about square matrices, we may come up with its determinant. What is the determinant of this square matrix \(\mathcal{M}_\Gamma\)?

\noindent\textbf{Brainstorm~\cite{n2}:} Well, recall that \(\Pi(X)\) is also a linear space over \(\mathbb{F}_2\), and so we don't distinguish \(1\) and \(-1\) in \(\Pi(X)\). Then if we consider a general square \((a_{i,j})_{n\times n}\), and assume it has some corresponding determinant, by \emph{Laplace Formula}, this determinant must satisfy
\[
\begin{split}
\det((a_{i,j})_{n\times n})&=\sum_{j=1}^{n} (-1)^{i+j} a_{i,j} M_{i,j}\\
&=\sum_{j=1}^{n}  a_{i,j} M_{i,j}
\end{split}
\]
for some \(1\le i\le n\). Here \((-1)^{i+j}M_{i,j}\) is the \emph{cofactor} of \(a_{i,j}\). In particular, we use this formula inductively from \(i=1\) to \(i=n\), and in this way we shall define

\begin{thm}[\emph{Determinant Definition}]
	\[
	\begin{split}
	\det:\textbf{M}_n\left(\Pi(X)\right)&\longrightarrow\Pi(X)\\
	(a_{i,j})_{n\times n}&\longmapsto \sum_{j\in \mathcal{S}_n} \prod_{i=1}^{n} a_{i,j_i}
	\end{split}
	\]
	is indeed a determinant. That is to say, it satisfies: \textbf{multilinearity}, \textbf{alternating property} and \textbf{identity-determinant 1}. Here \(\textbf{M}_n\left(\Pi(X)\right)\) collects all the \(n\times n\) matrices with entries in \(\Pi(X)\).
\end{thm}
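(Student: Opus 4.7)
The plan is to verify the three defining properties of a determinant directly from the Leibniz-style formula
\[
\det(M)=\sum_{j\in\mathcal{S}_n}\prod_{i=1}^{n}a_{i,j_i},
\]
leaning on two structural facts about $\Pi(X)$ that were essentially established earlier: it is a \emph{commutative} $\mathbb{F}_2$-algebra (the second definition of $\Pi(X)$ makes this transparent since $A\ast B=A\cap B$), and it has characteristic $2$, so $x+x=0$ for every $x\in\Pi(X)$. No further structure of $\Pi(X)$ is needed.

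First I would prove multilinearity in each row. Fix a row index $k$, and suppose the $k$-th row is written $a_{k,\bullet}=u_{k,\bullet}+v_{k,\bullet}$. In each term $\prod_i a_{i,j_i}$ of the Leibniz sum exactly one factor comes from row $k$, namely $a_{k,j_k}=u_{k,j_k}+v_{k,j_k}$. Using distributivity in $\Pi(X)$, that term splits as a sum of two products with the other $n-1$ factors identical; summing over $j\in\mathcal{S}_n$ and regrouping shows $\det(M)=\det(M_u)+\det(M_v)$, where $M_u,M_v$ are the matrices obtained by replacing row $k$ by $u$ and $v$. $\mathbb{F}_2$-scalar homogeneity is automatic because the only scalars are $0$ and $1$, and each term contains exactly one row-$k$ factor.

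Next I would prove the alternating property. Suppose rows $p$ and $q$ of $M$ coincide, with $p\ne q$. Pair each permutation $j\in\mathcal{S}_n$ with $j'=j\circ(p\;q)$, i.e.\ $j'_p=j_q$, $j'_q=j_p$ and $j'_i=j_i$ for $i\ne p,q$. This is an involution with no fixed points, because $j_p\ne j_q$ whenever $j$ is a permutation and $p\ne q$, so $\mathcal{S}_n$ is partitioned into pairs. Within a pair, the two products differ only in the row-$p$ and row-$q$ factors: one gives $a_{p,j_p}a_{q,j_q}$ and the other $a_{p,j_q}a_{q,j_p}$. Since $a_{p,\bullet}=a_{q,\bullet}$ we may rewrite both as $a_{p,j_p}a_{p,j_q}$ (in the first we swap $a_{q,j_q}$ for $a_{p,j_q}$; in the second we swap $a_{q,j_p}$ for $a_{p,j_p}$), and commutativity of $\Pi(X)$ makes them equal. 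Their sum is therefore $2$ times a common term, which is $0$ in characteristic $2$, so the total sum vanishes. Finally, $\det(I_n)=1$ is immediate: the identity permutation contributes $1\cdot 1\cdots 1=1$, and every other permutation picks up at least one off-diagonal $0$ and contributes $0$.

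The main obstacle, such as it is, is purely bookkeeping around the pairing $j\mapsto j'$ in the alternating step: one must make sure the transposition is applied to the row indices (not column indices), verify it is fixed-point free, and apply the row-equality substitution in the correct order so that commutativity of $\Pi(X)$ — not some cancellation law, which $\Pi(X)$ does not satisfy — is what produces the identification. Once these bookkeeping points are clear, all three determinantal axioms follow from distributivity, commutativity, and $1+1=0$.
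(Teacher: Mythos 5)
Your proof is correct, and on two of the three axioms it coincides with the paper's argument up to cosmetics: the paper proves $\det(I_n)=1$ exactly as you do, and proves multilinearity by expanding along a column where you expand along a row. The genuine divergence is in the alternating property. The paper only shows that the Leibniz sum is invariant under a row transposition, i.e.\ $\det((a_{\sigma(i),j}))=\det((a_{i,j}))$ for $\sigma$ a transposition, by reindexing the sum over $\mathcal{S}_n$; since $\Pi(X)$ has characteristic $2$, where $-1=1$, this swap-invariance is the nearly vacuous form of antisymmetry (any expression symmetric in the rows satisfies it). You instead prove the strictly stronger, standard statement that the determinant vanishes when two rows coincide, by pairing each permutation $j$ with $j\circ(p\,q)$ --- a fixed-point-free involution on $\mathcal{S}_n$ since $j_p\ne j_q$ --- and cancelling each pair using commutativity of $\Pi(X)$ together with $1+1=0$. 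In characteristic $2$ the two formulations are not equivalent, and it is your version that the paper's later arguments implicitly require: the proof of $\det(AB)=\det(A)\det(B)$ appeals to the one-dimensionality of the space of alternating multilinear forms, which rests on vanishing-on-equal-rows, not on swap-invariance. So your route costs one extra piece of bookkeeping (the involution and the order in which the row-equality substitutions are applied, which you handle correctly) but delivers the axiom in its usable form, whereas the paper's item 3, as written, establishes only the weaker property.
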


\begin{proof}
	\begin{itemize}
		\item[1.] (\emph{Identity has determinant 1}) For the identity matrix
		\[
		\left(\begin{matrix}
		1 & 0 & \cdots&0\\
		0 & 1 & \cdots&0\\
		\vdots&  \vdots&  \ddots&\vdots\\
		0 & 0 & \cdots &1
		\end{matrix}\right)
		\]
		we have \(\displaystyle \prod_{i=1}^{n} a_{i,j_i}\) is not 0 if and only if \(j\) is an identity in \(\mathcal{S}_n\), and so the determinant is exactly \(1^n=1\);
		\item[2.] (\emph{Multilinearity})  We consider two matrices 
		\[
		\begin{split}
		\left(\alpha_1,\,\alpha_2,\,\dots,\,\alpha_n\right) &\coloneq (a_{i,j})_{n\times n}\\
		\left(\beta_1,\,\beta_2,\,\dots,\,\beta_n\right) &\coloneq (b_{i,j})_{n\times n}
		\end{split}
		\]
		Then
		\[
		\begin{split}
		&\;\det\left(\left(\alpha_1,\,\dots,\,\alpha_l+\beta_l,\,\dots,\,\alpha_n\right)\right)\\=&\sum_{j\in\mathcal{S}_n}\left(a_{j^{-1}(l),l}+b_{j^{-1}(l),l}\right)\prod_{i\ne j^{-1}(l)}a_{i,j_i}\\
		=&\sum_{j\in\mathcal{S}_n}a_{j^{-1}(l),l}\prod_{i\ne j^{-1}(l)}a_{i,j_i}+\sum_{j\in\mathcal{S}_n}b_{j^{-1}(l),l}\prod_{i\ne j^{-1}(l)}a_{i,j_i}\\
		=&\det\left(\left(\alpha_1,\,\alpha_2,\,\dots,\,\alpha_l,\,\dots,\,\alpha_n\right)\right)+\det\left(\left(\alpha_1,\,\alpha_2,\,\dots,\,\beta_l,\,\dots,\,\alpha_n\right)\right)
		\end{split}
		\]
		for all \(1\le l\le n\);
		\item[3.] (\emph{Alternating Property}) We shall see if \(\sigma=(ts)\in S_X\), then
		\[
		\begin{split}
		\det((a_{\sigma(i),j})_{n\times n})&= \sum_{j\in \mathcal{S}_n} \prod_{i=1}^{n} a_{\sigma(i),j_i}\\
		&=\sum_{j\in \mathcal{S}_n} \prod_{\sigma(i)=1}^{n} a_{\sigma(i),j_i}\\
		&=\sum_{j\in \mathcal{S}_n} \prod_{i=1}^{n} a_{i,j\circ\sigma(i)}\\
		&=\sum_{ j\circ\sigma\in \mathcal{S}_n} \prod_{i=1}^{n} a_{i,j_i}\\
		&=\det((a_{i,j})_{n\times n})
		\end{split}
		\]
		Similarly we can show it for columns.
	\end{itemize}
\end{proof}

If we suppose \(\Gamma\) is a spanning graph, then now we are able to talk about its automorphism group. Assume we already know the data of \(\Gamma\), and so we can talk about things in the ring \(\Pi(X)\).

\begin{thm}
	With the abuse of notations, we may say for \(\Gamma\) a spanning graph over \(X\)
	\[
	Aut(\Gamma)=\det(\mathcal{M}_\Gamma)
	\]
	here \(\mathcal{M}_\Gamma\) is the canonical matrix of the spanning graph \(\Gamma\).
\end{thm}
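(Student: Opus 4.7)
The plan is to rewrite both sides inside the ring $\Pi^\prime(X)=\mathbb{P}(\mathcal{S}_X)$ supplied by Theorem \ref{isotwodef}, where multiplication is intersection and addition is symmetric difference, and then show they coincide as subsets of $\mathcal{S}_X$. Combining Theorem \ref{interunion} and Proposition \ref{determinantuse} gives
\[
Aut(\Gamma) \;=\; \bigcap_i\bigcup_j \sigma_{ij}G_i \;=\; \bigcup_{j \in \mathcal{S}_n}\bigcap_{i=1}^{n} \sigma_{i j_i} G_i.
\]
On the other side, unfolding the determinant as defined above yields
\[
\det(\mathcal{M}_\Gamma) \;=\; \sum_{j \in \mathcal{S}_n}\prod_{i=1}^{n} \sigma_{i j_i} G_i,
\]
and read in $\Pi^\prime(X)$ this is the symmetric difference, over all $j \in \mathcal{S}_n$, of the very same $n$-fold intersections $\bigcap_{i=1}^{n} \sigma_{i j_i} G_i$. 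So the whole theorem reduces to the claim that, for this particular family of sets, symmetric difference and union agree.

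The crucial step is pairwise disjointness of the family $\{\bigcap_{i=1}^{n} \sigma_{i j_i} G_i\}_{j \in \mathcal{S}_n}$. I would argue this directly: if $j \ne j^\prime$ in $\mathcal{S}_n$, then there is some index $i_0$ with $j_{i_0} \ne j^\prime_{i_0}$, so $\sigma_{i_0 j_{i_0}} G_{i_0}$ and $\sigma_{i_0 j^\prime_{i_0}} G_{i_0}$ are two distinct cosets of the subgroup $G_{i_0}$ and hence disjoint; the two full $n$-fold intersections inherit that disjointness. Since the summands are pairwise disjoint, their symmetric difference coincides with their disjoint union, giving
\[
\det(\mathcal{M}_\Gamma) \;=\; \bigsqcup_{j \in \mathcal{S}_n}\bigcap_{i=1}^{n} \sigma_{i j_i} G_i \;=\; Aut(\Gamma).
\]
Proposition \ref{cosetiff} then supplies a clean reading of the formula: the nonempty summands are exactly the cosets of $\mathcal{K}_\Gamma$ in $Aut(\Gamma)$, so the identity above is nothing but the decomposition of $Aut(\Gamma)$ into its $\mathcal{K}_\Gamma$-cosets packaged as a determinant.

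The main subtlety, rather than a deep obstacle, is making sure no cancellation silently destroys information when the determinant is expanded in characteristic $2$: a priori two equal summands would annihilate each other. The pairwise-disjointness argument rules this out completely, since a common element of two nonempty summands would force $j=j^\prime$. Once this is observed, the rest is bookkeeping through the isomorphism of Theorem \ref{isotwodef} and the translation between the ring operations $(+,\ast)$ and the set operations $(\triangle,\cap)$; no further computation is needed.
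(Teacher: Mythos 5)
Your proposal is correct and follows essentially the same route as the paper: the paper's own proof is just the one-line citation of Proposition \ref{determinantuse} and Theorem \ref{isotwodef}, and your argument is precisely the unpacking of that citation. The one point you make explicit that the paper leaves implicit is the pairwise disjointness of the intersections $\bigcap_{i}\sigma_{ij_i}G_i$ over distinct $j\in\mathcal{S}_n$, which is exactly what is needed to identify the $\mathbb{F}_2$-sum (symmetric difference) with the union; this is a worthwhile addition, since without it the characteristic-$2$ cancellation worry is real.
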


\begin{proof}
 We have written \(\mathcal{M}_\Gamma\) as in \emph{Equation} (\ref{canonicalmatrix}). By \emph{Proposition} \ref{determinantuse} and \emph{Theorem} \ref{isotwodef}, the result is easy to see.
\end{proof}

\section{Automorphism Group of A Given Hypergraph}

\subsection{Arbitrary Graphs}
One may find in the above theorem, we don't really need a graph to be spaning. In fact, the above theorem can be naturally extended to a much wider stage. We will give the general version for arbitrary hypergraphs over the prescribed \(X\).

To begin with, we will extend the theorem to arbitrary graphs.

\begin{thm}
	With the abuse of notations, we may say for \(\Gamma\) an arbitrary graph over \(X\)
	\[
	Aut(\Gamma)=\det(\mathcal{M}_\Gamma)
	\]
	here \(\mathcal{M}_\Gamma\) is the canonical matrix of the graph \(\Gamma\).
\end{thm}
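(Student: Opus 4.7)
The plan is to argue that the spanning hypothesis was essentially cosmetic in the previous theorem, so the same argument extends verbatim to arbitrary graphs. The chain of reasoning leading from the Canonical Matrix to $Aut(\Gamma)$---namely \emph{Theorem} \ref{interunion}, \emph{Proposition} \ref{determinantuse}, and the isomorphism \emph{Theorem} \ref{isotwodef}---relied only on $\Gamma$ being a finite collection of $2$-subsets of $X$, together with the stabilizers $G_i=S_{A_i}\otimes S_{X\setminus A_i}$ and the coset representatives $\sigma_{ij}$. None of these objects require $Sgl(\Gamma)=X$.

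First I would revisit the proof of \emph{Theorem} \ref{interunion} and confirm that the characterization
\[
Aut(\Gamma)=\bigcap_i \bigcup_j \sigma_{ij}G_i
\]
holds for any graph: a permutation preserves $\Gamma$ if and only if it sends each $A_i$ to some $A_j$, and this is exactly membership in $\bigcup_j \sigma_{ij} G_i$ for that $i$. Next I would note that the proof of \emph{Proposition} \ref{determinantuse} is purely group-theoretic (disjointness of distinct cosets of a common subgroup) and still applies. Consequently
\[
Aut(\Gamma)=\bigsqcup_{j\in \mathcal{S}_n}\bigcap_{i=1}^{n}\sigma_{ij_i}G_i,
\]
and under the isomorphism $\Lambda:\Pi(X)\to\Pi^\prime(X)$ of \emph{Theorem} \ref{isotwodef} the right-hand side is exactly $\det(\mathcal{M}_\Gamma)$, where $\mathcal{M}_\Gamma$ is built from the same entry formulas as in \emph{Equation} (\ref{canonicalmatrix}).

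The only point at which I expect to pause is to account properly for the free vertices $Y\coloneq X\setminus Sgl(\Gamma)$. For a non-spanning $\Gamma$, the group $Aut(\Gamma)$ naturally contains $S_Y$ as a factor, since any permutation of the free points extends to an automorphism by the identity on $Sgl(\Gamma)$. This factor is not lost in the matrix formulation: every stabilizer $G_i=S_{A_i}\otimes S_{X\setminus A_i}$ already contains $S_Y$, because $Y\subseteq X\setminus A_i$ for every $i$. Hence $S_Y\subseteq \mathcal{K}_\Gamma=\bigcap_i G_i$, and the free-vertex symmetries are absorbed uniformly into every entry of $\mathcal{M}_\Gamma$, emerging correctly in $\det(\mathcal{M}_\Gamma)$. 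With this verification made, the statement follows from the spanning case by direct inspection, so the proof reduces to pointing out that the spanning assumption was never used.
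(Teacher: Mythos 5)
Your proposal is correct and follows the same route as the paper, which likewise just observes that \emph{Proposition} \ref{determinantuse} and \emph{Theorem} \ref{isotwodef} never used the spanning hypothesis. Your extra check that the free-vertex symmetries $S_{X\setminus Sgl(\Gamma)}$ sit inside every stabilizer $G_i$ and hence inside $\mathcal{K}_\Gamma$ is a worthwhile sanity check the paper omits, but it does not change the argument.
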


\begin{proof}
	This result is natural because \emph{Proposition} \ref{determinantuse} and \emph{Theorem} \ref{isotwodef} do not require a graph to be spanning.
\end{proof}

\subsection{Homogeneous Hypergraphs}
For a \(k\)-homogeneous hypergraph \(\Gamma=\{A_1,\,\dots,\,A_n\}\), if we write \(A_l=\{a_l^1,a_l^2,\,\dots,\,a_l^k\}\), we define the canonical matrix of \(\Gamma\) to be 
\[
\begin{split}
\mathcal{M}_\Gamma\coloneq
\left(\begin{matrix}
\left(\begin{matrix}
A_1\\
A_1
\end{matrix}\right)&\left(\begin{matrix}
A_1\\
A_2
\end{matrix}\right)&\cdots&\left(\begin{matrix}
A_1\\
A_n
\end{matrix}\right)\\
\left(\begin{matrix}
A_2\\
A_1
\end{matrix}\right)&\left(\begin{matrix}
A_2\\
A_2
\end{matrix}\right)&\cdots&\left(\begin{matrix}
A_2\\
A_n
\end{matrix}\right)\\
\vdots&\vdots&\ddots&\vdots\\
\left(\begin{matrix}
A_n\\
A_1
\end{matrix}\right)&\left(\begin{matrix}
A_n\\
A_2
\end{matrix}\right)&\cdots&\left(\begin{matrix}
A_n\\
A_n
\end{matrix}\right)
\end{matrix}\right)
\end{split}
\]
here we mean 
\[
\left(\begin{matrix}
A_i\\
A_j
\end{matrix}\right)\coloneq
\sum_{\sigma\in\mathcal{S}_k}\left(\begin{matrix}
a_i^1&a_i^2&\cdots&a_i^k\\
a_j^{\sigma(1)}&a_j^{\sigma(2)}&\cdots&a_j^{\sigma(k)}
\end{matrix}\right)
\]
for each \(1\le i,j\le n\).

With similar arguments, we can show the theorem is also true for arbitrary homogeneous hypergraphs.

\subsection{Arbitrary Hypergraphs}
Suppose that we have \(X=\{s_1,\,\dots,\,s_m\}\), and an arbitrary hypergraph \(\Gamma\). We would like to write \(\displaystyle\Gamma=\bigsqcup_{k=1}^m \Gamma_k\), where each \(\Gamma_k\) is the \(k\)-section of \(\Gamma\). Let \(\mathcal{M}_k\) be the canonical matrix of each \(\Gamma_k\) when seen as homogeneous hypergraphs over \(X\). Since there is no transition between different sections, we shall define the canonical matrix for \(\Gamma\)
\[
\mathcal{M}_\Gamma\coloneq
\left(\begin{matrix}
\mathcal{M}_1 & 0 & \cdots&0\\
0 & \mathcal{M}_2 & \cdots&0\\
\vdots&  \vdots&  \ddots&\vdots\\
0 & 0 & \cdots &\mathcal{M}_m
\end{matrix}\right)
\]

\begin{thm}
	\(\det(\mathcal{M}_\Gamma)=Aut(\Gamma)\)
\end{thm}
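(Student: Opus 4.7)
The plan is to combine three ingredients already in hand: (i) the homogeneous case of the theorem, which gives $\det(\mathcal{M}_k)=\mathrm{Aut}(\Gamma_k)$ for each section, (ii) \emph{Proposition} \ref{sectionintersection}, which factors $\mathrm{Aut}(\Gamma)$ as the intersection of the section automorphism groups, and (iii) the identification of multiplication in $\Pi(X)$ with intersection in $\mathbb{P}(S_X)$ provided by \emph{Theorem} \ref{isotwodef}. The work in the middle is therefore purely determinantal: I must show that the determinant of a block diagonal matrix over $\Pi(X)$ factors as the product of the determinants of its diagonal blocks. Everything else is then assembled by a short chain of equalities.

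First I would unfold the Leibniz-type formula from the \emph{Determinant Definition} applied to $\mathcal{M}_\Gamma$. Writing $N=\sum_{k=1}^{m} n_k$, the formula gives
\[
\det(\mathcal{M}_\Gamma)=\sum_{j\in\mathcal{S}_N}\prod_{i=1}^{N} (\mathcal{M}_\Gamma)_{i,j_i}.
\]
Because off-diagonal blocks are the zero element of $\Pi(X)$ and a single zero factor kills a term, only those $j$ that preserve the block partition $\{1,\dots,n_1\}\sqcup\{n_1+1,\dots,n_1+n_2\}\sqcup\cdots$ can contribute. Such a $j$ decomposes uniquely as an $m$-tuple $(j^{(1)},\dots,j^{(m)})$ with $j^{(k)}\in\mathcal{S}_{n_k}$, and the surviving products factor across blocks. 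Using the commutativity of $\Pi(X)$ and distributing the sum back across the blocks yields
\[
\det(\mathcal{M}_\Gamma)=\prod_{k=1}^{m}\,\Bigl(\sum_{j^{(k)}\in\mathcal{S}_{n_k}}\prod_{i=1}^{n_k}(\mathcal{M}_k)_{i,j^{(k)}_i}\Bigr)=\prod_{k=1}^{m}\det(\mathcal{M}_k).
\]

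Once this block factorization is in hand, the conclusion is immediate: applying the homogeneous case of the theorem (from Subsections 5.1 and 5.2) gives $\det(\mathcal{M}_k)=\mathrm{Aut}(\Gamma_k)$ in $\Pi(X)$, and since multiplication $\ast$ in $\Pi(X)$ corresponds to intersection under the isomorphism $\Lambda$ of \emph{Theorem} \ref{isotwodef}, we obtain
\[
\det(\mathcal{M}_\Gamma)=\prod_{k=1}^{m}\mathrm{Aut}(\Gamma_k)=\bigcap_{k=1}^{m}\mathrm{Aut}(\Gamma_k)=\mathrm{Aut}(\Gamma),
\]
where the last equality is \emph{Proposition} \ref{sectionintersection}. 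I expect the only genuine obstacle to be bookkeeping around the block factorization; specifically, one has to be sure that a permutation of $[N]$ that respects the block decomposition truly corresponds to an independent choice of permutation in each $\mathcal{S}_{n_k}$, and that no cross-block cancellation occurs in characteristic two. Both facts come out cleanly from the zero-entry argument above, so the proof should be short once the block lemma is stated.
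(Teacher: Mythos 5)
Your proposal is correct and follows the same route as the paper: the paper's proof consists of asserting the block-diagonal factorization $\det(\mathcal{M}_\Gamma)=\prod_{k}\det(\mathcal{M}_k)$ "by linear algebra" and then invoking the section-intersection result, exactly the chain of equalities you assemble. The only difference is that you actually carry out the Leibniz-formula bookkeeping for the block factorization, which the paper leaves implicit.
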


\begin{proof}
	By linear algebra, one can see
	\[
	\det(\mathcal{M})=\prod_{k=1}^m \det(\mathcal{M}_k)
	\]
	thus the result is proved by using \emph{Theorem} \ref{sectionintersection}. 
\end{proof}

\section{Ring of Isopartials}

When talking about automorphism groups of graphs, one often comes up with the isomorphisms between different graphs. Now, we are going to find a way to describe these isomorphisms by using our knowledge of automorphisms.

To begin with, we shall define something similar to partials. In this section, \(X\coloneq\{s_1,\,s_2,\,\dots,\,s_m\}\) and \(Y\coloneq
\{t_1,\,t_2,\,\dots,\,t_m\}\) will be our ambient ground sets.

\begin{defn}[\emph{Isopartials}]
	A \(p\)-\textbf{isopartial} is an injective function from a \(p\)-subset of \(X\) to \(Y\). The \textbf{order} among isopartials is defined similar to that of partials.
\end{defn}

\begin{prop}
	Any \(p\)-isopartial can be extended to a bijective function from \(X\) to \(Y\).
\end{prop}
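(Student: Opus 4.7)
The plan is to mirror the proof of the analogous proposition for partials, but the argument is actually a bit cleaner here because the domain and codomain live in disjoint universes, so there is no concern about injectivity colliding between the ``old'' part and the ``new'' part. I would first extract the key cardinality observation, then construct an explicit bijection on the complements, and finally glue the two pieces together.

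First I would let \(\pi\) be a \(p\)-isopartial with domain \(A\subseteq X\) and image \(B\coloneq \pi(A)\subseteq Y\). Since \(\pi\) is injective we have \(|B|=p\), and since \(|X|=|Y|=m\) this gives \(|X\backslash A|=m-p=|Y\backslash B|\). Thus \(X\backslash A\) and \(Y\backslash B\) are finite sets of equal cardinality, so we may pick any bijection \(\tau\colon X\backslash A\longrightarrow Y\backslash B\); for concreteness we can order the two sets and send the \(k\)-th element of \(X\backslash A\) to the \(k\)-th element of \(Y\backslash B\), exactly as in Remark~\ref{constructionofsigma}.

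Then I would define the extension \(\tilde{\pi}\colon X\to Y\) by
\[
\tilde{\pi}(x)=
\begin{cases}
\pi(x), & x\in A,\\
\tau(x), & x\in X\backslash A,
\end{cases}
\]
and verify that \(\tilde{\pi}\) is a bijection. Injectivity is immediate: on \(A\) and on \(X\backslash A\) separately the map is injective (as \(\pi\) and \(\tau\) are), and the two images \(B\) and \(Y\backslash B\) are disjoint, so no collision can occur across the pieces. Surjectivity is equally immediate, since the image equals \(\pi(A)\cup \tau(X\backslash A)=B\cup (Y\backslash B)=Y\). By construction \(\tilde{\pi}\) restricts to \(\pi\) on \(A\), so \(\tilde{\pi}\ge \pi\) in the isopartial order.

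There is no real obstacle; the only thing to be careful about is the standing convention that \(|X|=|Y|\), without which the statement would be false. Since this is built into the setup of the section, the proof reduces to the counting identity \(|X\backslash A|=|Y\backslash B|\) together with the trivial gluing above.
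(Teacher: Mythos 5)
Your proof is correct, but it takes a genuinely different route from the paper's. The paper does not construct the extension by hand: it fixes a reference bijection \(j\) between the two ground sets, transports the isopartial \(\pi\) to the partial \(j\circ\pi\) on \(X\), invokes the earlier proposition that any partial extends to a permutation of \(X\), and then pulls the resulting permutation back through \(j^{-1}\) to get a bijection \(X\to Y\) dominating \(\pi\). (As printed the paper declares \(j\) to go from \(X\) to \(Y\), which makes the composite \(j\circ\pi\) ill-typed; the argument only works with \(j\colon Y\to X\), so the reduction needs that small repair.) You instead argue directly: the counting identity \(|X\backslash A|=m-p=|Y\backslash B|\) lets you choose a bijection \(\tau\) between the complements, and gluing \(\pi\) with \(\tau\) gives the extension, with injectivity and surjectivity immediate because \(B\) and \(Y\backslash B\) partition \(Y\). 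Your version is more elementary and self-contained, and it correctly identifies why this case is actually easier than the partial case (no need to reconcile a domain and codomain inside the same set, which is what forced the \(\Delta_1\to\Delta_2\) matching of \emph{Remark}~\ref{constructionofsigma} in the earlier proof). The paper's version buys consistency with the later construction of \(\Pi(X,Y)\) as \((j^\ast)^{-1}(\Pi(X))\), where everything about \(Y\) is systematically transported to \(X\) through a fixed \(j\); your direct construction does not feed into that machinery but proves the proposition with fewer moving parts. Both arguments ultimately rest on the same standing hypothesis \(|X|=|Y|\), which you are right to flag as essential.
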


\begin{proof}
	Let \(\pi\) be the isopartial, and if we define \(j\) a bijective function from \(X\) to \(Y\), then \(j\circ\pi\) becomes a \(p\)-partial on \(X\). Let \(\overline{j\circ\pi}\) denote the smallest extension of this partial, and we may check that \(j^{-1}\circ\overline{j\circ\pi}\) is in fact a bijection that dominates \(\pi\).
\end{proof}

For two isopartials, we can similarly define \emph{uniformity}, which is similar to that of partials. A finite collection of isopartials is called \emph{uniform} if they are pairwise uniform. 

\begin{prop}
	Given finitely many uniform isopartials, there exists a unique minimum isopartial that can restrict to all of them. 
\end{prop}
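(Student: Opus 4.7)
The plan is to mirror the proof of the analogous proposition for partials, exploiting the reduction trick already used to extend isopartials to bijections. Specifically, I would fix once and for all a bijection $j\colon Y\to X$; then for any $p$-isopartial $\pi_i\colon A_i\to Y$ (with $A_i\subseteq X$), the composite $\rho_i\coloneq j\circ\pi_i\colon A_i\to X$ is a $p$-partial on $X$. The first step is to verify that if $\{\pi_1,\dots,\pi_r\}$ is a uniform family of isopartials, then $\{\rho_1,\dots,\rho_r\}$ is a uniform family of partials on $X$: domain collisions $a_t=b_h$ give $\pi_i(a_t)=\pi_{i'}(b_h)$ in $Y$, hence $\rho_i(a_t)=\rho_{i'}(b_h)$ in $X$; and codomain collisions in $X$ pull back, via the bijection $j^{-1}$, to codomain collisions in $Y$ which uniformity of the $\pi_i$'s resolves. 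So the two notions of uniformity transport along $j$.

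Second, apply the partial version of the proposition already established in the paper to obtain a unique minimum partial $\rho=\bigvee_i\rho_i$ on $X$ restricting to each $\rho_i$. Define $\pi\coloneq j^{-1}\circ\rho$; this is an injective function from $\operatorname{dom}(\rho)=\bigcup_i\operatorname{dom}(\pi_i)$ into $Y$, i.e.\ an isopartial, and by construction it restricts to each $\pi_i$. For minimality, if $\tau$ is any isopartial restricting to every $\pi_i$, then $j\circ\tau$ is a partial restricting to every $\rho_i$, so $\rho\le j\circ\tau$ by the minimality clause for partials, and applying $j^{-1}$ gives $\pi\le\tau$. Uniqueness follows from the antisymmetry of the order: any other minimum would be both $\le\pi$ and $\ge\pi$, hence equal to $\pi$.

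I would then record, as a direct corollary, the definition of the \emph{join} $\pi_1\vee\pi_2$ of two uniform isopartials (and by induction of any uniform finite collection), together with the explicit formula: $\pi_1\vee\pi_2$ agrees with $\pi_i$ on $\operatorname{dom}(\pi_i)$, is well-defined on $\operatorname{dom}(\pi_1)\cap\operatorname{dom}(\pi_2)$ by the first uniformity clause, and is injective by the second. This makes the isopartial theory parallel to the partial theory and sets up the construction of the Ring of Isopartials in the next section.

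The only delicate point, and hence the expected obstacle, is that the definition of uniformity for isopartials is only declared to be \emph{``similar to''} that of partials rather than spelled out; one has to commit to the cross-set analogue (equality of values on common domain points in $Y$, and preimage-uniqueness in $X$ of any common image in $Y$) and check that it is precisely the condition under which the reduction to partials via $j$ preserves uniformity in both directions. Once that translation is in place, the rest of the argument is a mechanical transport of the partial case.
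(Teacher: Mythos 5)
Your proof is correct. The paper itself gives no argument here --- its proof reads, in full, ``Routine proof'' --- and the implicit intention is presumably to re-run the direct gluing construction from the partial case verbatim (put the isopartials together on the union of their domains, using the first uniformity clause for well-definedness on overlaps and the second for injectivity, then verify minimality as before). You instead transport the whole problem to $X$ along a fixed bijection $j\colon Y\to X$, apply the already-proved proposition for partials, and pull the result back with $j^{-1}$. This is the same trick the paper uses one proposition earlier to extend an isopartial to a bijection, so it is very much in the paper's spirit, and it buys you something the direct route does not: you never have to redo the gluing argument, only to check that $j^{*}$ is an order-isomorphism carrying uniform families to uniform families, which is immediate from injectivity of $j$. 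The one point you rightly flag --- that uniformity of isopartials is never actually defined in the paper, only declared ``similar'' --- is a genuine gap in the source, and your chosen cross-set formulation (agreement of values in $Y$ on common domain points, and uniqueness in $X$ of the preimage of any common value in $Y$) is exactly the condition that makes the transport work in both directions, so your proof is complete once that convention is fixed.
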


\begin{proof}
	Routine proof.
\end{proof}

\begin{defn}
	We shall define the above isopartial the \textbf{join} of the collection. And its symbol will be exactly that of partials.
\end{defn}

To construct what we shall call \emph{the Ring of Isopartials}, we would like to at first recall the following commutative diagram
\[
\xymatrix{
Y\ar[r]^j  &X\\
X\ar[u]^\pi \ar[ru]_{j\circ\pi} &}
\]
where \(j\) is a bijective function from \(Y\) to \(X\) and \(\pi\) is any bijection from \(X\) to \(Y\). Now, if we denote \(S_{X,Y}\) the collection of all bijections from \(X\) to \(Y\), we shall see \(j\) induces the following embedding
\[
\begin{split}
j^\ast: S_{X,Y} &\longrightarrow S_X\\
        \pi\quad&\longmapsto  j\circ\pi
\end{split}
\]
In fact, \(j^\ast\) is also a function on the power set of \(S_{X,Y}\):
\[
\begin{split}
j^\ast: \mathbb{P}(S_{X,Y}) &\longrightarrow \mathbb{P}(S_X)\\
\{\pi\}\quad&\longmapsto  \{j\circ\pi\}
\end{split}
\]
According to previous arguments, \(\mathbb{P}(S_X)\) is essentially \(\Pi(X)\), and thus \((j^\ast)^{-1}\) gives a structure on \(\mathbb{P}(S_{X,Y})\), which is exactly what we want. 

\begin{prop}
\(\displaystyle	(j^\ast)^{-1}(\Pi(X))\) is independent of the choice of \(j\).
\end{prop}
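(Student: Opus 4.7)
The plan is first to unpack the statement: $(j^\ast)^{-1}(\Pi(X))$ denotes the ring structure on $\mathbb{P}(S_{X,Y})$ obtained by pulling back, along $j^\ast$, the ring structure on $\Pi(X)$. The claim is that for any two choices $j_1,j_2:Y\to X$, the two resulting structures on $\mathbb{P}(S_{X,Y})$ coincide. I will show that the pulled-back structure is always the canonical Boolean algebra structure $(\mathbb{P}(S_{X,Y}),\cap,+)$, where $A+B=(A\setminus B)\cup(B\setminus A)$, which makes no reference to $j$ at all.

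By \emph{Theorem} \ref{isotwodef}, we may identify $\Pi(X)$ with $\Pi^\prime(X)=\mathbb{P}(S_X)$, whose multiplication is $\cap$ and whose addition is symmetric difference. The key observation is that the elementwise map $\pi\mapsto j\circ\pi$ is a bijection $S_{X,Y}\to S_X$, so the induced map $j^\ast:\mathbb{P}(S_{X,Y})\to\mathbb{P}(S_X)$ sends a subset $A$ to $\{j\circ\pi:\pi\in A\}$, and any such bijection of underlying sets automatically preserves $\cap$ and $+$. Explicitly, $j^\ast(A\cap B)=j^\ast(A)\cap j^\ast(B)$ uses only the injectivity of $j\circ(-)$, and the analogous identity for $+$ is immediate. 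Thus $j^\ast$ is a Boolean algebra isomorphism from $(\mathbb{P}(S_{X,Y}),\cap,+)$ onto $(\mathbb{P}(S_X),\cap,+)$.

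Consequently the pulled-back operations on $\mathbb{P}(S_{X,Y})$ are simply $\cap$ and $+$, regardless of which bijection $j$ is used. An alternative way to package the independence directly is to compare two choices: $j_1^\ast\circ(j_2^\ast)^{-1}$ equals left-composition by the permutation $j_1\circ j_2^{-1}\in S_X$, which is a bijection of $S_X$ and hence induces a Boolean algebra automorphism of $\mathbb{P}(S_X)$; this forces the two pullback structures on $\mathbb{P}(S_{X,Y})$ to coincide. There is no substantial obstacle: the whole content rests on \emph{Theorem} \ref{isotwodef} together with the fact that any bijection of underlying sets preserves set-theoretic Boolean operations on their power sets. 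The only point that deserves explicit mention in the write-up is that every ring operation in $\Pi(X)$ is expressible through these Boolean operations, so that transport along any $j^\ast$ is automatic.
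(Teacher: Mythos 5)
Your argument is correct: the paper dismisses this proposition with ``It's a routine proof,'' and what you have written is precisely the routine argument it has in mind --- since \(j^\ast\) is induced by the bijection \(\pi\mapsto j\circ\pi\) of underlying sets, it is automatically a Boolean algebra isomorphism onto \((\mathbb{P}(S_X),\cap,+)=\Pi^\prime(X)\), so the pulled-back operations on \(\mathbb{P}(S_{X,Y})\) are the intrinsic \(\cap\) and symmetric difference, with no dependence on \(j\). Your alternative packaging via \(j_1^\ast\circ(j_2^\ast)^{-1}\) being left-composition by \(j_1\circ j_2^{-1}\in S_X\) is also sound and arguably cleaner.
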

\begin{proof}
	It's a routine proof.
\end{proof}

\begin{defn}[\emph{Ring of Isopartials}]
	In the following context, we will call \(\displaystyle	(j^\ast)^{-1}(\Pi(X))\) \textbf{the Ring of Isopartials} from \(X\) to \(Y\), and will use \(\Pi(X,Y)\) to denote it. 
\end{defn}

\begin{rmk}
	One can see that there is a more direct way to consider the graph isomorphism problem by simply thinking of it in the ring \(\Pi(X\sqcup Y)\), of which \(\Pi(X,Y)\) is a subset. But here we regard both as the same, because they are special cases of each other.
\end{rmk}

\begin{defn}[\emph{Canonical Transformation}]
	Let \(\Gamma_1\coloneq\{A_1,\,\dots,\,A_n\}\) and \(\Gamma_2\coloneq
	\{B_1,\,\dots,\,B_n\}\) be two graphs on \(X\) and \(Y\) respectively, where \(A_i=\{a_i^1,a_i^2\}\) and \(B_i=\{b_i^1,b_i^2\}\). A \textbf{canonical transformation} from \(\Gamma_1\) to \(\Gamma_2\) will be the following matrix
\[
\begin{split}
\mathcal{M}_{\Gamma_1,\Gamma_2}\coloneq
\left(\begin{matrix}
\left(\begin{matrix}
A_1\\
B_1
\end{matrix}\right)&\left(\begin{matrix}
A_1\\
B_2
\end{matrix}\right)&\cdots&\left(\begin{matrix}
A_1\\
B_n
\end{matrix}\right)\\
\left(\begin{matrix}
A_2\\
B_1
\end{matrix}\right)&\left(\begin{matrix}
A_2\\
B_2
\end{matrix}\right)&\cdots&\left(\begin{matrix}
A_2\\
B_n
\end{matrix}\right)\\
\vdots&\vdots&\ddots&\vdots\\
\left(\begin{matrix}
A_n\\
B_1
\end{matrix}\right)&\left(\begin{matrix}
A_n\\
B_2
\end{matrix}\right)&\cdots&\left(\begin{matrix}
A_n\\
B_n
\end{matrix}\right)
\end{matrix}\right)
\end{split}
\]
here we mean 
\[
\left(\begin{matrix}
A_i\\
B_j
\end{matrix}\right)\coloneq
\left(\begin{matrix}
a_i^1&a_i^2\\
b_j^{1}&b_j^{2}
\end{matrix}\right)+\left(\begin{matrix}
a_i^1&a_i^2\\
b_j^{2}&b_j^{1}
\end{matrix}\right)
\]
\end{defn}

\begin{thm}
	Let \(G_i^1\) denote the stabiliser of \(A_i\) and \(G_i^2\) that of \(B_i\), and assume \(\sigma_{ij}\) is a bijection that brings \(A_i\) to \(B_j\), then we have that \(\sigma_{ij} G_i^1=G_j^2\sigma_{ij}\) collects all the bijections that bring \(A_i\) to \(B_j\). Moreover, we have
	\[
	Iso(\Gamma_1,\Gamma_2)=\bigcap_{i}\bigcup_j \sigma_{ij} G_i^1
	\]
\end{thm}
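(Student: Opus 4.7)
The plan is to mirror the proofs of \emph{Proposition} \ref{transfercoset} and \emph{Theorem} \ref{interunion} essentially verbatim, replacing ``permutation of $X$'' with ``bijection $X\to Y$'' and being careful only at the one spot where the single ambient group is now split into two stabilisers living on two different vertex sets.

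First I would establish the coset description. Given that $\sigma_{ij}$ sends $A_i$ to $B_j$, for any $g\in G_i^1$ the composite $\sigma_{ij}g$ still sends $A_i$ to $B_j$, so $\sigma_{ij}G_i^1$ is contained in the set of bijections $X\to Y$ taking $A_i$ to $B_j$. Conversely, if $\tau:X\to Y$ is any bijection with $\tau(A_i)=B_j$, then $\sigma_{ij}^{-1}\tau$ is a permutation of $X$ fixing $A_i$ setwise, hence lies in $G_i^1$, so $\tau\in\sigma_{ij}G_i^1$. The equality $\sigma_{ij}G_i^1=G_j^2\sigma_{ij}$ is then exactly the argument of \emph{Proposition} \ref{transfercoset}: the map $g_j\mapsto \sigma_{ij}^{-1}g_j\sigma_{ij}$ is an injection $G_j^2\to G_i^1$ between finite sets of equal cardinality (both equal $2\cdot(m-2)!$), hence a bijection, giving $G_j^2\sigma_{ij}\subseteq\sigma_{ij}G_i^1$ and, by the symmetric argument using $\sigma_{ij}g\sigma_{ij}^{-1}$ for $g\in G_i^1$, the reverse inclusion.

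For the ``moreover'' statement I follow the proof of \emph{Theorem} \ref{interunion}. If $g\in Iso(\Gamma_1,\Gamma_2)$ then for each $i$ there exists some $j=j(i)$ with $g(A_i)=B_j$, so by the coset description just proved $g\in\sigma_{ij}G_i^1\subseteq\bigcup_j\sigma_{ij}G_i^1$; this holds for every $i$, so $g\in\bigcap_i\bigcup_j\sigma_{ij}G_i^1$. Conversely, if $g$ lies in the intersection then for every $i$ there is some $j$ with $g(A_i)=B_j\in\Gamma_2$, i.e.\ $g$ sends every edge of $\Gamma_1$ into $\Gamma_2$. Since $g$ is a bijection $X\to Y$, its induced map $\Gamma_1\to\Gamma_2$ on edges is injective; combined with $|\Gamma_1|=|\Gamma_2|=n$, it is a bijection of edge sets, so $g$ is a genuine isomorphism.

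I expect no real obstacle. The only delicate point, which the automorphism version of the argument hides, is that membership in $\bigcap_i\bigcup_j\sigma_{ij}G_i^1$ only guarantees that $g$ maps $\Gamma_1$ \emph{into} $\Gamma_2$; one must invoke the finiteness hypothesis $|\Gamma_1|=|\Gamma_2|$ together with the injectivity of $g$ as a vertex map to upgrade this to a bijection of edge sets. Once this is flagged, the proof is routine.
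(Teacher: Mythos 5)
Your proposal is correct and follows exactly the route the paper intends: the paper dismisses this theorem with ``Routine proof,'' and your argument is precisely the transcription of \emph{Proposition}~\ref{transfercoset} and \emph{Theorem}~\ref{interunion} to bijections $X\to Y$ that the author is implicitly invoking. Your one genuine addition --- observing that membership in $\bigcap_i\bigcup_j\sigma_{ij}G_i^1$ only gives a map of $\Gamma_1$ \emph{into} $\Gamma_2$, and that injectivity of $g$ plus $|\Gamma_1|=|\Gamma_2|=n$ is needed to upgrade this to surjectivity on edges --- is a real gap in the paper's treatment (present already in the converse direction of \emph{Theorem}~\ref{interunion}) that you correctly identify and close.
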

\begin{proof}
	Routine proof.
\end{proof}
	\begin{thm}
		With the abuse of notations, we may say for \(\Gamma_1\) and \(\Gamma_2\) arbitrary graphs over \(X\) and \(Y\) respectively
		\[
		Iso(\Gamma_1,\Gamma_2)=\det(\mathcal{M}_{\Gamma_1,\Gamma_2})
		\]
		here \(\mathcal{M}_{\Gamma_1,\Gamma_2}\) is the canonical transformation from \(\Gamma_1\) to \(\Gamma_2\).
	\end{thm}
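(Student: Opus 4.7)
The plan is to parallel, step by step, the proof of the automorphism determinant formula for spanning graphs, simply replacing partials by isopartials and the ring \(\Pi(X)\) by the ring of isopartials \(\Pi(X,Y)\). The previous theorem in this section already furnishes
\[
Iso(\Gamma_1,\Gamma_2)=\bigcap_{i}\bigcup_j \sigma_{ij}G_i^1\,,
\]
so the entire task reduces to recognising the right-hand side as \(\det(\mathcal{M}_{\Gamma_1,\Gamma_2})\) computed inside \(\Pi(X,Y)\).

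First I would establish the isopartial analogue of \emph{Theorem} \ref{partialeq}: for any bijection \(\sigma_{ij}\colon X\to Y\) carrying \(A_i=\{a_i^1,a_i^2\}\) to \(B_j=\{b_j^1,b_j^2\}\), the coset \(\sigma_{ij}G_i^1\) consists exactly of those bijections whose isopartial on \(A_i\) is one of
\[
\left(\begin{matrix} a_i^1 & a_i^2 \\ b_j^1 & b_j^2 \end{matrix}\right),\qquad \left(\begin{matrix} a_i^1 & a_i^2 \\ b_j^2 & b_j^1 \end{matrix}\right).
\]
The case analysis from \emph{Theorem} \ref{partialeq} goes through verbatim: since \(G_i^1=\mathcal{S}_{A_i}\otimes \mathcal{S}_{X\setminus A_i}\), left-multiplying \(\sigma_{ij}\) by an element of \(G_i^1\) only swaps (or not) the images of \(a_i^1,a_i^2\), while having no effect on the isopartial restricted to \(A_i\) otherwise. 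This identifies the \((i,j)\) entry of \(\mathcal{M}_{\Gamma_1,\Gamma_2}\), viewed as a subset of \(S_{X,Y}\), with the coset \(\sigma_{ij}G_i^1\).

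Next I would prove the isopartial analogue of \emph{Proposition} \ref{determinantuse}. Two entries \(\sigma_{i_1j}G_{i_1}^1\) and \(\sigma_{i_2j}G_{i_2}^1\) sitting in the same \(j\)-th column comprise, respectively, the bijections carrying \(A_{i_1}\) and \(A_{i_2}\) onto \(B_j\); if \(i_1\neq i_2\) no single bijection does both, so these subsets are disjoint. Consequently
\[
\bigcap_{i}\bigcup_j \sigma_{ij}G_i^1=\bigsqcup_{j_\ast\in S_n}\bigcap_{i=1}^{n}\sigma_{i,j_i}G_i^1\,,
\]
all other terms being empty. Finally, I would transport the whole picture through the isomorphism \(j^\ast\colon \Pi(X,Y)\xrightarrow{\sim}\Pi(X)\): multiplication in \(\Pi(X,Y)\) corresponds, via \(j^\ast\), to the join of isopartials (equivalently, to intersection of subsets of \(S_{X,Y}\)), and addition corresponds to symmetric difference, which on pairwise disjoint summands reduces to disjoint union. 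Unfolding \(\det(\mathcal{M}_{\Gamma_1,\Gamma_2})=\sum_{j_\ast\in S_n}\prod_{i}(\mathcal{M}_{\Gamma_1,\Gamma_2})_{i,j_i}\) then produces exactly the right-hand side above, and the theorem follows.

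The main obstacle is conceptual rather than computational: \(S_{X,Y}\) has no intrinsic group structure, so all the algebraic manipulations that worked so comfortably inside \(S_X\) (``distinct cosets are disjoint'', ``multiplication is join'') have to be justified by pulling back along an auxiliary bijection \(j\) via \(j^\ast\). The earlier proposition that \((j^\ast)^{-1}(\Pi(X))\) is independent of \(j\) legitimises this transport, but one has to state each step in a way that is plainly compatible with the identification. Once that bookkeeping is clean, the proof is a mechanical repetition of the spanning graph case and may be fairly dismissed as a ``routine proof''.
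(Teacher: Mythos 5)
Your proposal is correct and follows exactly the route the paper intends: the paper's own ``proof'' of this theorem is the single sentence ``It is natural to see,'' and what you have written out --- identifying each entry of \(\mathcal{M}_{\Gamma_1,\Gamma_2}\) with the coset \(\sigma_{ij}G_i^1\) via the isopartial analogue of the partial-class decomposition, using column-wise disjointness so that only the permutation terms of the Leibniz expansion survive, and transporting the whole computation through \(j^\ast\) into \(\Pi(X)\) --- is precisely the spanning-graph argument from the earlier sections that the author is implicitly invoking. Nothing in your write-up diverges from that intended path, and your remark that the disjoint-coset steps must be re-justified because \(S_{X,Y}\) carries no group structure is the one genuine point of care the paper glosses over.
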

	
	\begin{proof}
		It is natural to see.
	\end{proof}

\section{Calculations}

We now go back to what we have in \emph{Section 5}. Our goal now is to find a method to calculate the determinant of a given matrix in the ring \(\Pi(X)\), especially for the canonical matrices. 

\begin{defn}[\emph{Polypartial}]
	We will call elements in the ring \(\Pi(X)\) \textbf{polypartials}. One must be able to tell the difference between partials and polypartials. For convenience, in the rest of this article, \(\pi\) either with subscripts or with superscripts will simply denote partials.
\end{defn}

\begin{prop}
	Let \(\kappa\) be a polypartial, then for any positive integer \(n\), we have \(\kappa^n=\kappa\).
\end{prop}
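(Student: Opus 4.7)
The plan is to reduce the claim to the case $n=2$ (idempotency) and invoke the isomorphism of Theorem \ref{isotwodef}. Once $\kappa^2 = \kappa$ is established, a trivial induction yields $\kappa^n = \kappa$ for all $n \ge 1$: if $\kappa^n = \kappa$ then $\kappa^{n+1} = \kappa \cdot \kappa^n = \kappa^2 = \kappa$. So the only real content is the idempotency statement.

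For idempotency, the cleanest route is through the second description $\Pi'(X) = \mathbb{P}(S_X)$, where multiplication is by definition set-theoretic intersection. For any subset $A \subseteq S_X$ one has
\[
A \ast A = A \cap A = A,
\]
so every element of $\Pi'(X)$ is trivially idempotent. Applying the ring isomorphism $\Lambda : \Pi(X) \to \Pi'(X)$ from Theorem \ref{isotwodef}, for any polypartial $\kappa$ we get $\Lambda(\kappa^2) = \Lambda(\kappa)^2 = \Lambda(\kappa)$, and injectivity of $\Lambda$ forces $\kappa^2 = \kappa$.

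A direct proof staying inside $\Pi(X)$ would start from the canonical form $\kappa = \sum_l \bigvee_k \pi_l^k + c$ given in the previous proposition. Each single join $\bigvee_k \pi_l^k$ squares to itself via the defining relations $\pi^2 = \pi$ and the vanishing of products of colliding brics. The genuine obstacle in this approach is controlling the cross terms $(\bigvee_k \pi_l^k)(\bigvee_k \pi_m^k)$ between distinct summands: these are generally nonzero polypartials, and one would have to argue that when the full square is expanded they pair up and cancel over $\mathbb{F}_2$ against the overcounted pieces of the diagonal. This bookkeeping is exactly what the set-theoretic inclusion--exclusion of partial classes in $\mathbb{P}(S_X)$ encodes automatically, so going through Theorem \ref{isotwodef} sidesteps the combinatorics entirely; I would present only the three-line proof and mention the direct route as a remark.
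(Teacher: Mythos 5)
Your main argument is correct and takes a genuinely different route from the paper. You transport the problem to \(\Pi'(X)=\mathbb{P}(S_X)\) via the isomorphism \(\Lambda\) of Theorem \ref{isotwodef}, where multiplication is intersection and idempotency is immediate from \(A\cap A=A\); injectivity of \(\Lambda\) then gives \(\kappa^2=\kappa\) back in \(\Pi(X)\), and your induction handles general \(n\). The paper instead does exactly the ``direct proof staying inside \(\Pi(X)\)'' that you set aside as combinatorially delicate: it writes \(\kappa=\pi_1+\dots+\pi_s\) and expands
\[
(\pi_1+\dots+\pi_s)^2=\pi_1^2+\dots+\pi_s^2+\sum_{i\ne j}2\,\pi_i\pi_j=\pi_1+\dots+\pi_s ,
\]
using only that each \(\pi_i\) (a product of brics) satisfies \(\pi_i^2=\pi_i\) and that the cross terms carry coefficient \(2=0\) over \(\mathbb{F}_2\). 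The ``genuine obstacle'' you describe --- cross terms needing to cancel against overcounted pieces of the diagonal --- does not actually arise: in a commutative \(\mathbb{F}_2\)-algebra the cross terms pair up as \(\pi_i\pi_j+\pi_j\pi_i\) and vanish outright, with no interaction with the diagonal terms, so the direct route is as short as yours. What your route buys is independence from the canonical-form presentation of polypartials (it works for any element of the ring once the isomorphism is granted), at the cost of leaning on Theorem \ref{isotwodef}, whose verification the paper leaves to the reader; the paper's computation is self-contained within the generators-and-relations definition of \(\Pi(X)\).
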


\begin{proof}
	By previous theorem, we may write \(\kappa=\pi_1+\dots+\pi_s\). Notice
	\[
	\begin{split}
	(\pi_1+\dots+\pi_s)^2&=\pi_1^2+\dots+\pi_s^2 + \sum_{1\le i\ne j\le s} 2\pi_i\pi_j\\
	                    &=\pi_1^2+\dots+\pi_s^2\\
	                    &=\pi_1+\dots+\pi_s
	\end{split}
	\]
	thus, inductively one can show that \(\kappa^n=\kappa\).
\end{proof}

We want to mimic what we have learnt in \emph{Linear Algebra}, and start to consider the squares in \textit{\(\textbf{M}_{n}\)}\(\left(\Pi(X)\right)\). On \textit{\(\textbf{M}_{n}\)}\(\left(\Pi(X)\right)\) we can still define addition by adding entries, and define multiplication as we have down to matrices over the reals. Therefore \textit{\(\textbf{M}_{n}\)}\(\left(\Pi(X)\right)\) becomes a ring. We can do `scalar' multiplication by multiplying polypartials to the entries and thus this ring becomes a module over \(\Pi(X)\). Many of the properties of matrices over the reals remain true here. We list them in the following theorem.

\begin{thm}
	\(A=(a_{ij})_{n\times n},B=(b_{ij})_{n\times n}\) are in \(\textbf{M}_{n}\left(\Pi(X)\right)\), and \(A^\mathnormal{T}\) is the transpose of \(A\), then
	\begin{itemize}
	\item[1.] \(\det(I_n)=1\);
	\item[2.] \(\det(A^\mathnormal{T})=\det(A)\);
	\item[3.] \(\det(AB)=\det(A)\det(B)\);
    \item[4.] 	
	\(\det((\kappa a_{ij})_{n\times n})=\det(\kappa( a_{ij})_{n\times n})=\kappa\det((a_{ij})_{n\times n});\)

	\item[5.] If \(A\) is triangular, then 
	\[
	\det(A)=\prod_{i=1}^n a_{i,i}\;;
	\]
	\item[6.] \(\det\) is multilinear;
	\item[7.] \(\det\) has alternating property.
	\end{itemize}
\end{thm}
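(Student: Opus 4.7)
The plan is to rely on the Leibniz-style formula $\det(A)=\sum_{j\in\mathcal{S}_n}\prod_{i}a_{i,j_i}$ from the \emph{Determinant Definition}, together with the fact that $\Pi(X)$ is a commutative ring of characteristic $2$ in which $\kappa^2=\kappa$ holds for every polypartial. Items $1$, $6$, $7$ were already recorded when the \emph{Determinant Definition} was set up, so the real work is in $2$, $3$, $4$, $5$. Before attacking these, I would first upgrade the alternating property from the statement ``row swaps preserve $\det$'' (which in characteristic $2$ is automatic and carries no information) to the strong form ``if two rows coincide then $\det=0$''. Given equal rows $p,q$ with $a_{p,x}=a_{q,x}$ for all $x$, pair each $\sigma\in\mathcal{S}_n$ with $\sigma\circ(pq)$; by commutativity of $\Pi(X)$ and the row equality, the two products $\prod_i a_{i,\sigma(i)}$ and $\prod_i a_{i,(\sigma\circ(pq))(i)}$ agree, so each pair contributes $2\cdot(\text{something})=0$.

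With the strong alternating property available, the remaining short items fall out by familiar manipulations. For the transpose identity $\det(A^\mathnormal{T})=\det(A)$, apply Leibniz to $A^\mathnormal{T}$, reindex factors via $k=\sigma(i)$, and use commutativity of $\Pi(X)$ to reorder the product; the outer sum then reindexes over $\sigma^{-1}$, which ranges over $\mathcal{S}_n$ bijectively. For the scalar identity $\det(\kappa A)=\kappa\det(A)$, apply multilinearity row by row, extracting $\kappa$ from each of the $n$ rows and collapsing $\kappa^n$ to $\kappa$ using the preceding proposition on polypartials. For the triangular identity, if $A$ is upper triangular then $a_{i,j}=0$ whenever $i>j$, so $\prod_i a_{i,\sigma(i)}\ne 0$ forces $\sigma(i)\ge i$ for all $i$; greedy descent from $i=n$ downwards then pins $\sigma=\mathrm{id}$, leaving only the diagonal product.

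The hard part is $\det(AB)=\det(A)\det(B)$. The strategy is the standard one: the $i$-th row of $AB$ equals $\sum_k a_{i,k}\cdot(\text{$k$-th row of }B)$, so multilinearity applied to each of the $n$ rows gives
\[
\det(AB)=\sum_{k_1,\ldots,k_n}a_{1,k_1}\cdots a_{n,k_n}\,\det(B_{k_1},\ldots,B_{k_n}).
\]
By the strong alternating property proved in the first paragraph, every summand in which the tuple $(k_1,\ldots,k_n)$ has a repeated index is killed, and the sum reduces to one indexed by $\tau\in\mathcal{S}_n$. Row-swap invariance then collapses each $\det(B_{\tau(1)},\ldots,B_{\tau(n)})$ to $\det(B)$, after which the remaining sum $\sum_{\tau}a_{1,\tau(1)}\cdots a_{n,\tau(n)}$ is exactly $\det(A)$, and the product formula follows. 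The main obstacle throughout is precisely the gap between the weak alternating property as stated and the strong form actually needed in item $3$; this is why I would isolate the strong form as a preliminary lemma before tackling multiplicativity.
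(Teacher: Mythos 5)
Your proposal is correct, and on the one nontrivial item (multiplicativity) it takes a genuinely different, and in fact more careful, route than the paper. The paper proves item 3 by the abstract argument ``$d_A(B)=\det(AB)$ is alternating and multilinear, the space of such functions has dimension one, hence $d_A=\kappa\det$ with $\kappa=d_A(I_n)=\det(A)$''; you instead expand each row of $AB$ as $\sum_k a_{i,k}B_k$, use multilinearity to get $\sum_{k_1,\dots,k_n}\prod_i a_{i,k_i}\det(B_{k_1},\dots,B_{k_n})$, kill the tuples with repeats, and collapse the rest to $\det(A)\det(B)$. Your preliminary lemma --- that $\det$ vanishes when two rows coincide, proved by pairing $\sigma$ with $\sigma\circ(pq)$ and using characteristic $2$ --- is exactly the point where you improve on the paper: the paper's item 7 only establishes invariance under row swaps, which over $\mathbb{F}_2$ carries no information, and its rank-one claim for the module of ``alternating'' multilinear forms is unsupported without the vanishing-on-equal-rows form (the module of merely swap-invariant multilinear forms is much larger). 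So your explicit Cauchy--Binet-style computation both avoids an appeal to a dimension count that is delicate over a Boolean ring and supplies the strong alternating property the paper tacitly needs. Your treatments of items 2, 4, and 5 coincide with the paper's (direct Leibniz reindexing, extraction of $\kappa^n=\kappa$, and the greedy argument forcing $\sigma=\mathrm{id}$ for triangular matrices, the last of which the paper merely declares ``clear''). One small housekeeping remark: the paper's multilinearity is proved for columns, so either invoke item 2 or note that the same Leibniz manipulation gives row multilinearity, and record the homogeneity $\det(\dots,\kappa\alpha_l,\dots)=\kappa\det(\dots,\alpha_l,\dots)$ explicitly, since the paper's item 6 states only additivity.
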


\begin{proof}
\begin{itemize}
	\item[1.] We have shown 1, 6, 7 before;
	\item[2.] By definition
	\[
	\begin{split}
	\det(A^\mathnormal{T})&=\det((a_{ji})_{n\times n})\\
	                      &=\sum_{j\in S_n}\prod_{i=1}^{n}  a_{j_i,i}\\
	                      &=\sum_{j\in S_n}\prod_{i=1}^{n}  a_{i,j^{-1}(i)}\\
	                      &=\sum_{j\in S_n}\prod_{i=1}^{n}  a_{i,j_i}\\
	                      &=\det(A);
	\end{split}
	\]
	\item[3.] Define \(d_A(B)\coloneq \det(AB)\), we see \(d_A\) is an alternating multilinear function, and since the dimension of the space of such alternating multilinear functions is 1, we may suppose \(d_A(B)=\kappa\det(B)\). Let \(B=I_n\), we see \(\kappa=\det(A)\), thus we have that \(\det(AB)=\det(A)\det(B)\);
	\item[4.] By definition	\[
	\begin{split}
	\det((\kappa a_{ij})_{n\times n})&= \sum_{j\in S_n} \prod_{i=1}^{n} \kappa a_{ij_i}\\
	&=\sum_{j\in S_n}\kappa^n  \prod_{i=1}^{n}  a_{ij_i}\\
	&=\kappa^n \det(( a_{ij})_{n\times n})\\
	&=\kappa\det((a_{ij})_{n\times n}).
	\end{split}
	\]
	\item[5.] It is clear to see.
\end{itemize}
\end{proof}

Now, recall the the canonical matrix is of the shape

\[
\mathcal{M}_\Gamma={\small\left(\begin{matrix}
	\alpha_1^1\beta_1^1+\gamma_1^1\zeta_1^1 & \alpha_2^1\beta_2^1+\gamma_2^1\zeta_2^1 & \cdots&\alpha_n^1\beta_n^1+\gamma_n^1\zeta_n^1\\
	\alpha_1^2\beta_1^2+\gamma_1^2\zeta_1^2 & \alpha_2^2\beta_2^2+\gamma_2^2\zeta_2^2 & \cdots&\alpha_n^2\beta_n^2+\gamma_n^2\zeta_n^2\\
	\vdots&  \vdots&  \ddots&\vdots\\
	\alpha_1^n\beta_1^n+\gamma_1^n\zeta_1^n & \alpha_2^n\beta_2^n+\gamma_2^n\zeta_2^n & \cdots &\alpha_n^n\beta_n^n+\gamma_n^n\zeta_n^n
	\end{matrix}
	\right)}
\]
To calculate the determinant of this matrix, one can directly use the \emph{Leibniz Formula} to obtain the result step by step. But here we would like to recall our original idea, \emph{Theorem} \ref{interunion}. 

Observe that on each row of the matrix \(\mathcal{M}_\Gamma\), entries in different columns are disjoint, and thus their union is exactly their sum. 

\begin{defn}[\emph{Initiators and Terminators}]
For the \(i\)-th row \(\left(A_i\right)\), we define the \(i\)-th \textbf{initiator} \(\hat{\tau}_i\coloneq(\alpha_1^i\beta_1^i+\gamma_1^i\zeta_1^i) + (\alpha_2^i\beta_2^i+\gamma_2^i\zeta_2^i) + \cdots+(\alpha_n^i\beta_n^i+\gamma_n^i\zeta_n^i)\), and for the \(j\)-th column \(\left(A_j\right)\), we define the \(j\)-th \textbf{terminator} \(\check{\tau}_j\coloneq(\alpha_j^1\beta_j^1+\gamma_j^1\zeta_j^1) + (\alpha_j^2\beta_j^2+\gamma_j^2\zeta_j^2) + \cdots+(\alpha_j^n\beta_j^n+\gamma_j^n\zeta_j^n)\).
\end{defn}

\begin{thm}
	\[
	\begin{split}
	\det(\mathcal{M}_\Gamma)&=\hat{\tau}_1\hat{\tau}_2\cdots\hat{\tau}_n\\
	                        &=\check{\tau}_1\check{\tau}_2\cdots\check{\tau}_n
	\end{split}
	\]
\end{thm}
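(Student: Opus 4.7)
The plan is to reduce the identity to the key disjointness observation that was already noted in the discussion after the definition of $\mathcal{M}_\Gamma$: any two entries in the same column are disjoint cosets of the same $G_i$, and analogously any two entries in the same row are disjoint cosets. Since multiplication in $\Pi(X) \cong \Pi^\prime(X)$ is set intersection, disjointness translates to vanishing products.

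First I would expand the product of initiators by distributing. By definition of $\hat{\tau}_i = \sum_{j=1}^n a_{i,j}$, one gets
\[
\prod_{i=1}^n \hat{\tau}_i \;=\; \sum_{f\colon [n]\to[n]} \prod_{i=1}^n a_{i,f(i)},
\]
where the sum runs over all functions, not just bijections. The goal is then to show that the only surviving terms come from permutations, after which the right-hand side coincides with $\det(\mathcal{M}_\Gamma)$ by the determinant definition given earlier.

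The crucial step is to argue that if $f$ is not injective, then the corresponding summand is zero. Suppose $f(i_1) = f(i_2) = j$ with $i_1 \ne i_2$. Then the factors $a_{i_1,j}$ and $a_{i_2,j}$ both appear in the product, and these are $\sigma_{i_1 j} G_{i_1}$-type and $\sigma_{i_2 j} G_{i_2}$-type cosets; however, viewed through Proposition~\ref{transfercoset}, both entries are distinct cosets of the \emph{same} group $G_j$ (namely $G_j \sigma_{i_1 j}$ and $G_j \sigma_{i_2 j}$), hence disjoint as subsets of $S_X$. Under the isomorphism of Theorem~\ref{isotwodef}, the product of two disjoint classes vanishes in $\Pi(X)$, and with it the entire $f$-summand. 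Thus the sum collapses to
\[
\prod_{i=1}^n \hat{\tau}_i \;=\; \sum_{\sigma \in S_n} \prod_{i=1}^n a_{i,\sigma(i)} \;=\; \det(\mathcal{M}_\Gamma),
\]
establishing the row identity.

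For the column identity, I would either transpose and reuse the row argument (using $\det(A^T)=\det(A)$ from the preceding theorem) or repeat the same reasoning symmetrically, exploiting that entries in a common row $i$ are cosets $\sigma_{i j_1} G_i$ and $\sigma_{i j_2} G_i$ taking $A_i$ to distinct edges $A_{j_1} \ne A_{j_2}$, hence distinct cosets of $G_i$, hence disjoint. The main obstacle — really the only subtle point — is making the disjointness-equals-vanishing step airtight inside $\Pi(X)$ rather than $\Pi^\prime(X)$; this is handled cleanly by invoking Theorem~\ref{isotwodef} once, after which the rest is distribution of multiplication over addition.
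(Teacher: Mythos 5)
Your proposal is correct and follows essentially the same route as the paper: the paper's own proof simply cites Theorem~\ref{interunion} together with the observation (recorded as Proposition~\ref{determinantuse}) that selections hitting one column twice vanish because same-column entries are disjoint cosets of a common $G_j$, which is exactly your distribute-and-kill-non-injective-terms argument phrased set-theoretically; the column identity is likewise handled by transposition in both. Your version is somewhat more explicit about where the disjointness is used and about passing through the isomorphism of Theorem~\ref{isotwodef}, but it is not a different argument.
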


\begin{proof}
	The first equality is after \emph{Theorem} \ref{interunion}. The second product is in fact equal to the determinant of the transpose of \(\mathcal{M}_\Gamma\) by the same theorem and since determinants are invariant under transposition, we see the equality holds.
\end{proof}

This will be our main method to calculate the determinant of canonical matrices. We will give an example to illustrate the process.

\begin{exmp}
We now consider the automorphism group of the following graph, denoted \(\Gamma\):
	
	\begin{center}
		\includegraphics[width=5cm]{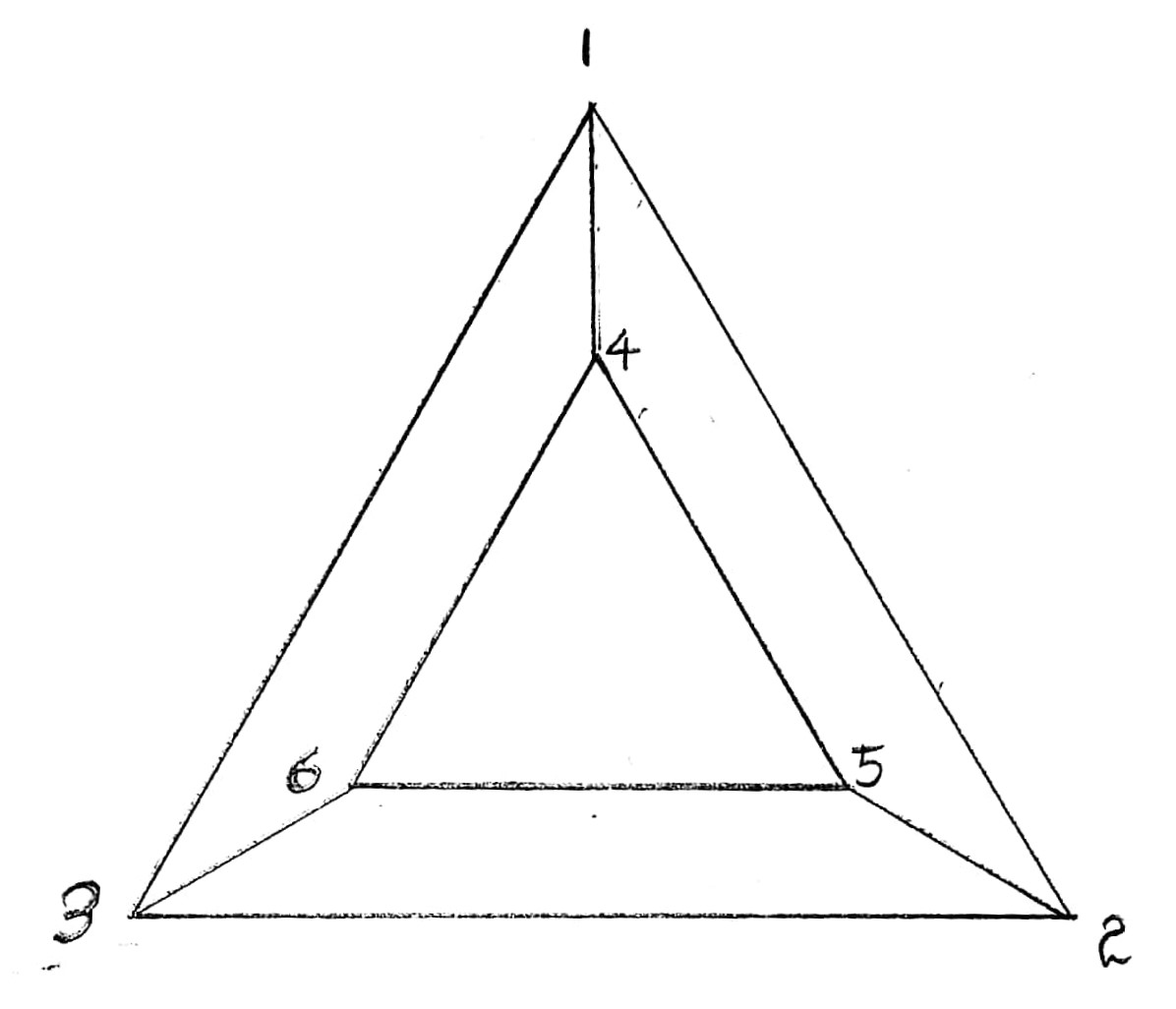}
	\end{center}
For simplicity we write the edges as \(ij\coloneq\{i,j\}\), and thus \(ij=ji\), and we define 
\[\left[\begin{matrix}
a&b\\
c&d
\end{matrix}\right]\coloneq \left(\begin{matrix}
a&b\\
c&d
\end{matrix}\right)+\left(\begin{matrix}
a&b\\
d&c
\end{matrix}\right)
\]
Now, their initiators are listed as follows:

\begin{itemize}
	\item[Edge 12] \[
	\begin{split}
	\hat{\tau}_{12}=&\left[\begin{matrix}
	1&2\\
	1&2
	\end{matrix}\right]+\left[\begin{matrix}
	1&2\\
	2&3
	\end{matrix}\right]+\left[\begin{matrix}
	1&2\\
	1&3
	\end{matrix}\right]+\left[\begin{matrix}
	1&2\\
	1&4
	\end{matrix}\right]+\left[\begin{matrix}
	1&2\\
	2&5
	\end{matrix}\right]\\&+\left[\begin{matrix}
	1&2\\
	3&6
	\end{matrix}\right]+\left[\begin{matrix}
	1&2\\
	4&5
	\end{matrix}\right]+\left[\begin{matrix}
	1&2\\
	5&6
	\end{matrix}\right]+\left[\begin{matrix}
	1&2\\
	4&6
	\end{matrix}\right]
	\end{split}
	\]
	
		\item[Edge 23] \[
		\begin{split}
		\hat{\tau}_{23}=&\left[\begin{matrix}
		2&3\\
		1&2
		\end{matrix}\right]+\left[\begin{matrix}
		2&3\\
		2&3
		\end{matrix}\right]+\left[\begin{matrix}
		2&3\\
		1&3
		\end{matrix}\right]+\left[\begin{matrix}
		2&3\\
		1&4
		\end{matrix}\right]+\left[\begin{matrix}
		2&3\\
		2&5
		\end{matrix}\right]\\&+\left[\begin{matrix}
		2&3\\
		3&6
		\end{matrix}\right]+\left[\begin{matrix}
		2&3\\
		4&5
		\end{matrix}\right]+\left[\begin{matrix}
		2&3\\
		5&6
		\end{matrix}\right]+\left[\begin{matrix}
		2&3\\
		4&6
		\end{matrix}\right]
		\end{split}
		\]	
		
		\item[Edge 13] \[
		\begin{split}
		\hat{\tau}_{13}=&\left[\begin{matrix}
		1&3\\
		1&2
		\end{matrix}\right]+\left[\begin{matrix}
		1&3\\
		2&3
		\end{matrix}\right]+\left[\begin{matrix}
		1&3\\
		1&3
		\end{matrix}\right]+\left[\begin{matrix}
		1&3\\
		1&4
		\end{matrix}\right]+\left[\begin{matrix}
		1&3\\
		2&5
		\end{matrix}\right]\\&+\left[\begin{matrix}
		1&3\\
		3&6
		\end{matrix}\right]+\left[\begin{matrix}
		1&3\\
		4&5
		\end{matrix}\right]+\left[\begin{matrix}
		1&3\\
		5&6
		\end{matrix}\right]+\left[\begin{matrix}
		1&3\\
		4&6
		\end{matrix}\right]
		\end{split}
		\]	
		
		\item[Edge 14] \[
		\begin{split}
		\hat{\tau}_{14}=&\left[\begin{matrix}
		1&4\\
		1&2
		\end{matrix}\right]+\left[\begin{matrix}
		1&4\\
		2&3
		\end{matrix}\right]+\left[\begin{matrix}
		1&4\\
		1&3
		\end{matrix}\right]+\left[\begin{matrix}
		1&4\\
		1&4
		\end{matrix}\right]+\left[\begin{matrix}
		1&4\\
		2&5
		\end{matrix}\right]\\&+\left[\begin{matrix}
		1&4\\
		3&6
		\end{matrix}\right]+\left[\begin{matrix}
		1&4\\
		4&5
		\end{matrix}\right]+\left[\begin{matrix}
		1&4\\
		5&6
		\end{matrix}\right]+\left[\begin{matrix}
		1&4\\
		4&6
		\end{matrix}\right]
		\end{split}
		\]	
		
		\item[Edge 25] \[
		\begin{split}
		\hat{\tau}_{25}=&\left[\begin{matrix}
		2&5\\
		1&2
		\end{matrix}\right]+\left[\begin{matrix}
		2&5\\
		2&3
		\end{matrix}\right]+\left[\begin{matrix}
		2&5\\
		1&3
		\end{matrix}\right]+\left[\begin{matrix}
		2&5\\
		1&4
		\end{matrix}\right]+\left[\begin{matrix}
		2&5\\
		2&5
		\end{matrix}\right]\\&+\left[\begin{matrix}
		2&5\\
		3&6
		\end{matrix}\right]+\left[\begin{matrix}
		2&5\\
		4&5
		\end{matrix}\right]+\left[\begin{matrix}
		2&5\\
		5&6
		\end{matrix}\right]+\left[\begin{matrix}
		2&5\\
		4&6
		\end{matrix}\right]
		\end{split}
		\]	
		
		\item[Edge 36] \[
		\begin{split}
		\hat{\tau}_{36}=&\left[\begin{matrix}
		3&6\\
		1&2
		\end{matrix}\right]+\left[\begin{matrix}
		3&6\\
		2&3
		\end{matrix}\right]+\left[\begin{matrix}
		3&6\\
		1&3
		\end{matrix}\right]+\left[\begin{matrix}
		3&6\\
		1&4
		\end{matrix}\right]+\left[\begin{matrix}
		3&6\\
		2&5
		\end{matrix}\right]\\&+\left[\begin{matrix}
		3&6\\
		3&6
		\end{matrix}\right]+\left[\begin{matrix}
		3&6\\
		4&5
		\end{matrix}\right]+\left[\begin{matrix}
		3&6\\
		5&6
		\end{matrix}\right]+\left[\begin{matrix}
		3&6\\
		4&6
		\end{matrix}\right]
		\end{split}
		\]	
		
		\item[Edge 45] \[
		\begin{split}
		\hat{\tau}_{45}=&\left[\begin{matrix}
		4&5\\
		1&2
		\end{matrix}\right]+\left[\begin{matrix}
		4&5\\
		2&3
		\end{matrix}\right]+\left[\begin{matrix}
		4&5\\
		1&3
		\end{matrix}\right]+\left[\begin{matrix}
		4&5\\
		1&4
		\end{matrix}\right]+\left[\begin{matrix}
		4&5\\
		2&5
		\end{matrix}\right]\\&+\left[\begin{matrix}
		4&5\\
		3&6
		\end{matrix}\right]+\left[\begin{matrix}
		4&5\\
		4&5
		\end{matrix}\right]+\left[\begin{matrix}
		4&5\\
		5&6
		\end{matrix}\right]+\left[\begin{matrix}
		4&5\\
		4&6
		\end{matrix}\right]
		\end{split}
		\]	
		
		\item[Edge 56] \[
		\begin{split}
		\hat{\tau}_{56}=&\left[\begin{matrix}
		5&6\\
		1&2
		\end{matrix}\right]+\left[\begin{matrix}
		5&6\\
		2&3
		\end{matrix}\right]+\left[\begin{matrix}
		5&6\\
		1&3
		\end{matrix}\right]+\left[\begin{matrix}
		5&6\\
		1&4
		\end{matrix}\right]+\left[\begin{matrix}
		5&6\\
		2&5
		\end{matrix}\right]\\&+\left[\begin{matrix}
		5&6\\
		3&6
		\end{matrix}\right]+\left[\begin{matrix}
		5&6\\
		4&5
		\end{matrix}\right]+\left[\begin{matrix}
		5&6\\
		5&6
		\end{matrix}\right]+\left[\begin{matrix}
		5&6\\
		4&6
		\end{matrix}\right]
		\end{split}
		\]	
		
		\item[Edge 46] \[
		\begin{split}
		\hat{\tau}_{46}=&\left[\begin{matrix}
		4&6\\
		1&2
		\end{matrix}\right]+\left[\begin{matrix}
		4&6\\
		2&3
		\end{matrix}\right]+\left[\begin{matrix}
		4&6\\
		1&3
		\end{matrix}\right]+\left[\begin{matrix}
		4&6\\
		1&4
		\end{matrix}\right]+\left[\begin{matrix}
		4&6\\
		2&5
		\end{matrix}\right]\\&+\left[\begin{matrix}
		4&6\\
		3&6
		\end{matrix}\right]+\left[\begin{matrix}
		4&6\\
		4&5
		\end{matrix}\right]+\left[\begin{matrix}
		4&6\\
		5&6
		\end{matrix}\right]+\left[\begin{matrix}
		4&6\\
		4&6
		\end{matrix}\right]
		\end{split}
		\]		
\end{itemize}
\noindent After multiplicating these initiators, we obtain 
	\[
	\begin{split}
	Aut(\Gamma)=&\det(\mathcal{M}_\Gamma)\\
	=&\left(\begin{matrix}
	1&2&3&4&5&6\\
	1&2&3&4&5&6
	\end{matrix}\right)+\left(\begin{matrix}
	1&2&3&4&5&6\\
	2&1&3&5&4&6
	\end{matrix}\right)+\left(\begin{matrix}
	1&2&3&4&5&6\\
	3&2&1&6&5&4
	\end{matrix}\right)\\&+\left(\begin{matrix}
	1&2&3&4&5&6\\
	2&3&1&5&6&4
	\end{matrix}\right)+\left(\begin{matrix}
	1&2&3&4&5&6\\
	1&3&2&4&6&5
	\end{matrix}\right)+\left(\begin{matrix}
	1&2&3&4&5&6\\
	4&5&6&1&2&3
	\end{matrix}\right)\\&+\left(\begin{matrix}
	1&2&3&4&5&6\\
	6&5&4&3&2&1
	\end{matrix}\right)+\left(\begin{matrix}
	1&2&3&4&5&6\\
	5&6&4&2&3&1
	\end{matrix}\right)+\left(\begin{matrix}
	1&2&3&4&5&6\\
	6&4&5&3&1&2
	\end{matrix}\right)\\&+\left(\begin{matrix}
	1&2&3&4&5&6\\
	4&6&5&1&3&2
	\end{matrix}\right)+\left(\begin{matrix}
	1&2&3&4&5&6\\
	5&4&6&2&1&3
	\end{matrix}\right)+\left(\begin{matrix}
	1&2&3&4&5&6\\
	3&1&2&6&4&5
	\end{matrix}\right)
	\end{split}
	\]
\textbf{Calculations:} We shall do the calculations case by case. Each case corresponds to a term in \(\hat{\tau}_{12}\).
Then, we shall see 
\[
\begin{split}
\left[\begin{matrix}
1&2\\
1&2
\end{matrix}\right]\hat{\tau}_{23}&=\left[\begin{matrix}
1&2\\
1&2
\end{matrix}\right]\left[\begin{matrix}
2&3\\
2&3
\end{matrix}\right]+\left[\begin{matrix}
1&2\\
1&2
\end{matrix}\right]\left[\begin{matrix}
2&3\\
1&3
\end{matrix}\right]+\left[\begin{matrix}
1&2\\
1&2
\end{matrix}\right]\left[\begin{matrix}
2&3\\
1&4
\end{matrix}\right]+\left[\begin{matrix}
1&2\\
1&2
\end{matrix}\right]\left[\begin{matrix}
2&3\\
2&5
\end{matrix}\right]\\
&=\left(\begin{matrix}
1&2&3\\
1&2&3
\end{matrix}\right)+\left(\begin{matrix}
1&2&3\\
2&1&3
\end{matrix}\right)+\left(\begin{matrix}
1&2&3\\
2&1&4
\end{matrix}\right)+\left(\begin{matrix}
1&2&3\\
1&2&5
\end{matrix}\right)
\end{split}
\]
Now, since 
\[
\left[\begin{matrix}
1&2\\
1&2
\end{matrix}\right]\hat{\tau}_{13}=\left(\begin{matrix}
1&2&3\\
1&2&3
\end{matrix}\right)+\left(\begin{matrix}
1&2&3\\
1&2&4
\end{matrix}\right)+\left(\begin{matrix}
1&2&3\\
2&1&5
\end{matrix}\right)+\left(\begin{matrix}
1&2&3\\
2&1&3
\end{matrix}\right)
\]
we have 
\[
\left[\begin{matrix}
1&2\\
1&2
\end{matrix}\right]\hat{\tau}_{13}\hat{\tau}_{23}=\left(\begin{matrix}
1&2&3\\
1&2&3
\end{matrix}\right)+\left(\begin{matrix}
1&2&3\\
2&1&3
\end{matrix}\right)
\]
do this again, we have
\[
\left[\begin{matrix}
1&2\\
1&2
\end{matrix}\right]\hat{\tau}_{14}\hat{\tau}_{13}\hat{\tau}_{23}=\left(\begin{matrix}
1&2&3&4\\
1&2&3&4
\end{matrix}\right)+\left(\begin{matrix}
1&2&3&4\\
2&1&3&5
\end{matrix}\right)
\]
At the end of this process, we have 
\[
\left[\begin{matrix}
1&2\\
1&2
\end{matrix}\right]\hat{\tau}_{36}\hat{\tau}_{25}\hat{\tau}_{14}\hat{\tau}_{13}\hat{\tau}_{23}=\left(\begin{matrix}
1&2&3&4&5&6\\
1&2&3&4&5&6
\end{matrix}\right)+\left(\begin{matrix}
1&2&3&4&5&6\\
2&1&3&5&4&6
\end{matrix}\right)
\]
and it is not hard to check these are automorphisms;

\noindent Similarly, we have
\[
\left[\begin{matrix}
1&2\\
2&3
\end{matrix}\right]\hat{\tau}_{36}\hat{\tau}_{25}\hat{\tau}_{14}\hat{\tau}_{13}\hat{\tau}_{23}=\left(\begin{matrix}
1&2&3&4&5&6\\
3&2&1&6&5&4
\end{matrix}\right)+\left(\begin{matrix}
1&2&3&4&5&6\\
2&3&1&5&6&4
\end{matrix}\right);
\]
\[
\left[\begin{matrix}
1&2\\
1&3
\end{matrix}\right]\hat{\tau}_{36}\hat{\tau}_{25}\hat{\tau}_{14}\hat{\tau}_{13}\hat{\tau}_{23}=\left(\begin{matrix}
1&2&3&4&5&6\\
1&3&2&4&6&5
\end{matrix}\right)+\left(\begin{matrix}
1&2&3&4&5&6\\
3&1&2&6&4&5
\end{matrix}\right);
\]
\[
\left[\begin{matrix}
1&2\\
4&5
\end{matrix}\right]\hat{\tau}_{36}\hat{\tau}_{25}\hat{\tau}_{14}\hat{\tau}_{13}\hat{\tau}_{23}=\left(\begin{matrix}
1&2&3&4&5&6\\
5&4&6&2&1&3
\end{matrix}\right)+\left(\begin{matrix}
1&2&3&4&5&6\\
4&5&6&1&2&3
\end{matrix}\right);
\]
\[
\left[\begin{matrix}
1&2\\
5&6
\end{matrix}\right]\hat{\tau}_{36}\hat{\tau}_{25}\hat{\tau}_{14}\hat{\tau}_{13}\hat{\tau}_{23}=\left(\begin{matrix}
1&2&3&4&5&6\\
6&5&4&3&2&1
\end{matrix}\right)+\left(\begin{matrix}
1&2&3&4&5&6\\
5&6&4&2&3&1
\end{matrix}\right);
\]
\[
\left[\begin{matrix}
1&2\\
4&6
\end{matrix}\right]\hat{\tau}_{36}\hat{\tau}_{25}\hat{\tau}_{14}\hat{\tau}_{13}\hat{\tau}_{23}=\left(\begin{matrix}
1&2&3&4&5&6\\
6&4&5&3&1&2
\end{matrix}\right)+\left(\begin{matrix}
1&2&3&4&5&6\\
4&6&5&1&3&2
\end{matrix}\right);
\]
\[
\left[\begin{matrix}
1&2\\
1&4
\end{matrix}\right]\hat{\tau}_{13}\hat{\tau}_{23}=0;\;
\left[\begin{matrix}
1&2\\
2&5
\end{matrix}\right]\hat{\tau}_{13}\hat{\tau}_{23}=0;\;
\left[\begin{matrix}
1&2\\
3&6
\end{matrix}\right]\hat{\tau}_{13}\hat{\tau}_{23}=0.
\]
\end{exmp}

\bibliographystyle{plain}
\bibliography{tem}

\begin{thebibliography}{1}

\bibitem{n1}
Ge\u{n}a Hahn.
\newblock The automorphism group of a product of hypergraphs.
\newblock {\em Journal of Combinatorial Theory, Series B}, 30, 1981.

\bibitem{n2}
B.~R. McDonald.
\newblock {\em Linear Algebra over Commutative Rings}.
\newblock Chapman \& Hall/CRC Pure and Applied Mathematics. CRC Press, 0
  edition, 1984.

\end{thebibliography}

\end{document}